\title{Algebraic Multilevel Methods for Markov Chains\thanks{This work was supported by Deutsche Forschungsgemeinschaft (DFG) through the Collaborative Research Center
CRC 1114 ``Scaling Cascades in Complex Systems'', Project (B03) ``Multilevel coarse graining of multiscale problems''. I would like to thank my supervisor Prof. Kornhuber for his helpful advice and academic encouragement.}}
\author{Lukas Polthier}
\begin{document}

\setlength\parindent{0pt}
\maketitle

\begin{abstract}
A new algebraic multilevel algorithm for computing the second eigenvector of a column-stochastic matrix is presented. The method is based on a deflation approach in a multilevel aggregation framework. In particular a square and stretch approach, first introduced by Treister and Yavneh, is applied. The method is shown to yield good convergence properties for typical example problems.
\end{abstract}

\begin{keywords} 
algebraic multigrid; smoothed aggregation; Markov chains; second eigenvector
\end{keywords}

\begin{AMS}
65C40, 65F10, 65F15, 60J22
\end{AMS}

\pagestyle{myheadings}
\nocite{*} 
\markboth{ALGEBRAIC MULTILEVEL METHODS FOR MARKOV CHAINS}{LUKAS POLTHIER}

\section{Introduction}
The eigenvectors of Markov chains contain information about the important processes of a stochastic system. For the class of irreducible and aperiodic Markov chains, the eigenvector corresponding to the eigenvalue $1$ is unique and contains the limit information of the Markov chain called stationary distribution. Eigenvectors corresponding to eigenvalues close to $1$ encode the slow structural transitions, i.e. the essential information about the dynamics. Homogeneous, finite-state, discrete-time Markov chains are described by a column-stochastic transition matrix which can be analyzed by tools from linear algebra.
\vspace{1em}

Let $B \in \mathbb{R}^{n \times n}$ be a column-stochastic matrix, i.e. $1^TB = 1^T$, $B_{ij}\ge 0$.
The problem of finding the second eigenvector can be written algebraically as
\begin{align}
	Bx = \lambda_2 x, \label{Hauptgleichung}
\end{align}
where we assume that the eigenvalues of $B$ fulfill
\begin{align}
	1 = \lambda_1 > \lambda_2 > |\lambda_k| \label{AssumptionOnEigenvalues}
\end{align}
for all $k \ge 3$ and $\lambda_2 \in \mathbb{R}$. Note that $\lambda_2$ is not known a priori. In particular we can not simply solve a linear system of equations but must exploit the eigenvalue structure of $B$ to compute the second eigenvector.
\vspace{1em}

In relevant applications, Markov chains typically have a large number of variables which makes direct solvers to obtain the eigenvectors inapplicable. Instead one uses iterative solvers which tend to converge rather slowly. For many years, it is well known that algebraic multilevel methods can be used to drastically improve the convergence of iterative solvers \cite{BMR84}.
However the classical theory of algebraic multilevel methods is applicable only to a certain type of matrices, e.g. symmetric positive-definite matrices \cite{RS87}.
For Markov matrices, classical multilevel aggregation methods to compute the invariant measure converge rather slowly in numerical experiments.
In \cite{DMM10} and \cite{TY10} more sophisticated approaches that can drastically speed-up the convergence of the multilevel aggregation method for the first eigenvector are proposed.
Based on these ideas we present a new algorithm to compute the second eigenvector of a Markov matrix using a multilevel approach combined with a Wielandt deflation.

\section{Aggregation Multilevel for the Invariant Measure}
\label{chapterAggregationMultilevelMethods}
In this section we recall aggregation multilevel methods for Markov chains to compute the invariant measure. These methods have been used in the literature \cite{DMM08, HL94, TY10}.

The problem of finding the invariant measure, can be written as
\begin{align*}
	Bx = x
\end{align*}
with $x \neq 0$.
For $A:=I-B$ we may equivalently solve
\begin{align}
	Ax = 0 \label{EineGleichung}
\end{align}
with $|x| = 1$.
In fact we are looking for a non-trivial vector in the kernel of $A$. Here $A$ is an irreducible, singular M-matrix that has a strictly positive solution to (\ref{EineGleichung}) that is unique up to scaling. This property is important for the well-posedness of the multilevel framework.

\subsection{Relaxation Methods} \label{sectionRelaxationMethods}
Two common relaxation methods for (\ref{EineGleichung}) are Jacobi and power iteration.
Assume that $B$ has eigenvalues $\lambda_i$ with right eigenvectors $v^{(i)}$ that are ordered as in (\ref{AssumptionOnEigenvalues}).
In the following we analyze how the eigenvectors govern the efficiency of power and Jacobi iteration.
The former is given by the iteration
\begin{align}
	x^{(k+1)} = Bx^{(k)}.
\end{align}
Suppose the initial probability vector $x^{(0)}$ is spanned by the eigenvectors $v^{(i)} \in \mathbb{R}^n$ of $B$ for some coefficients $\alpha_i$.
\[
	x^{(0)} = \sum_{i=1}^n \alpha_i v^{(i)}
\]
Then the $k$-th iterate of power iteration can be expressed by
\begin{align} \label{eigenvaluesIterationMatrix}
	x^{(k)} = B^k x^{(0)}
			= \sum_{i=1}^n \alpha_i B^k v^{(i)}
			= \sum_{i=1}^n \alpha_i \lambda_i^k v^{(i)}
			= \left[ \alpha_1 v_1 + \sum_{i=2}^n \alpha_i \lambda_i^k v^{(i)} \right].
\end{align}
If the initial iterate $x^{(0)}$ was chosen such that $\alpha_1 \neq 0$, the power method will converge as $|\lambda_i| < 1$ for $i=2, \cdots, n$.
\vspace{1em}

Conversely, Jacobi iteration aims at solving the linear system $A\pi = 0$ rather than $B\pi = \pi$. We consider a splitting of the coefficient matrix
\[
	A = D - (L + U)
\]
into diagonal part $D$ and lower respectively upper triangular part $L, U$. In particular $D,L,U$ are non-negative matrices and we assume that $D$ is non-singular.\\
Then, up to rescaling, Jacobi iteration with damping parameter $\omega \in (0,1]$ is given by
\begin{align}
	x^{(k+1)} &= x^{(k)} - \omega D^{-1}Ax^{(k)},
\end{align}
with iteration matrix $G_{dJac} = I - \omega D^{-1}A$. It is the eigenvalues of $G_{dJac}$ that determine the rate of convergence of the method. In particular we can think of Jacobi iteration as a power iteration with iteration matrix $G_{dJac}$.\\

\subsection{Smoothing Property}
Particularly enlightening is the effect of the two iterative methods to the eigenvectors of the coefficient matrix. In equation (\ref{eigenvaluesIterationMatrix}) we see how the eigenvalues define to which extend the iterative method is effective in reducing a particular error.\\
This is also what we observe in the numerical experiments: In the left plot of figure \ref{figureSmoothErrors} we see that damped Jacobi iteration is very effective in reducing errors which correspond to eigenvalues of $B$ close to $-1$.
On the other hand, errors corresponding to eigenvalues of $B$ close to $1$ are hardly affected by the relaxation.
In the right plot of figure \ref{figureSmoothErrors} we see that power iteration is very effective in reducing errors corresponding to eigenvalues of $B$ with small absolute value. The eigenmodes corresponding to eigenvalues of $B$ with large absolute value are hardly affected by power iteration.\\
Altogether the iterative methods do not reduce all error modes equally effective. That is why we speak of \textit{smoothing} or \textit{relaxation} methods in the sense that they ``smooth'' out some errors while leaving others largely unchanged.

For symmetric, positive definite matrices, this effect called \textit{smoothing property} can be defined more precisely \cite[chapter 3]{Stu99}.

\begin{figure}
 \centering
 \includegraphics[scale=0.5, trim = 0 0 0 0]{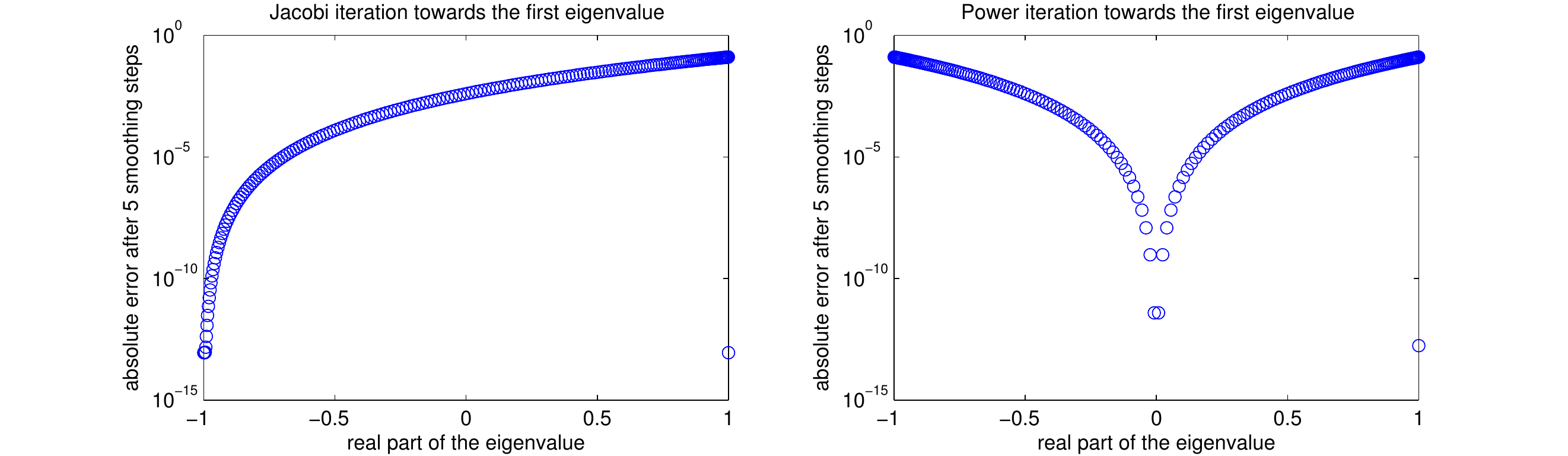}
 \caption{Illustration of the smoothing property of Jacobi and power iteration on the uniform chain matrix (section \ref{matrixUniformChain}). For each eigenvalue $\lambda_i$, $i=1, \cdots, n$, we take the initial iterate $x^{(0)} = v^{(1)} + \frac{1}{100} v^{(i)}$ consisting of the equilibrium distribution slightly perturbed in the direction of the $i$-th eigenvector. Then we plot the error $\|x^{(5)} - v^{(1)}\|_{l^1}$ to the equilibrium distribution after five relaxation steps.} \label{figureSmoothErrors}
\end{figure}

The basic idea of multilevel methods is to reduce the smooth error components on a smaller linear system.

\subsection{Aggregation Multilevel Methods}

As the exact solution $x$ to (\ref{EineGleichung}) has strictly positive components, it is reasonable to assume strict positivity of an approximation $x^{(i)}$ as well.
Then the multiplicative error of $x^{(i)}$ to $x$ is defined by 
\begin{align*}
	e^{(i)} := \left(\text{diag}(x^{(i)}) \right)^{-1} x.
\end{align*}
Then a multiplicative error formulation of (\ref{EineGleichung}) has the form
\begin{align}
	A \text{ diag}(x^{(i)})e^{(i)} = 0, \label{fineLevelProbEq}
\end{align}
where we have $e^{(i)} = 1 \in \mathbb{R}^n$ at convergence.
\vspace{1em}

From the theory on algebraic multilevel methods for positive definite matrices and the general experience with multilevel methods it turns out (see \cite{RS87}) that smooth errors $e$ have small variation along strong algebraic couplings. That means $e_i \approx e_j$ if $|A_{ij}|/A_{ii}$ is large.

Let $n_c < n$ and assume there is a partition of $\{1, \cdots, n\}$ into $n_c$ aggregates $\varphi_j$. These aggregates shall be chosen by some aggregation algorithm that aims at aggregating strongly connected elements.
Then, at least approximately, smooth errors are piecewise constant along the aggregates.
In \cite{TY10} a bottom-up aggregation approach is proposed, which seems to be a reasonable choice for the numerical example problems and will be used throughout this paper.
The choice of a particular aggregation method is typically motivated by heuristics and is not subject of this paper.
In fact the choice of aggregates may depend on the current iterate. 

Once reasonable aggregates have been determined, the aggregation matrix $Q\in \{0,1\}^{n\times n_c}$ defined by
\[
	Q_{ij} = \bigg \{ \begin{matrix}
	1 & \text{if } i \in \varphi_j\\
	0 & \text{if } i \not\in \varphi_j.
	\end{matrix}
\]
contains the information which fine level variables belong to which aggregate.

In particular $Q$ has full rank and up to reordering it has the following form.
\begin{align*}
	Q = \left[\begin{array}{c | c | c}
		1 & 0 & \cdots\\
		\vdots & \vdots & \cdots\\
		1 & 0 & \cdots\\
		\hline
		0 & 1 & \cdots\\
		\vdots & \vdots & \cdots\\ 
		0 & 1 & \cdots\\
		\hline
		\vdots & \vdots & \ddots
	\end{array}\right]
\end{align*}

On the coarser level we then solve for the error-components that are piecewise constant on each aggregate $\varphi_i$. If we replace $e^{(i)}$ by $Qe_c$ and multiply equation (\ref{fineLevelProbEq}) from the left by $Q^T$ we obtain the multiplicative error formulation on the coarse level
\begin{align}
	Q^TA ~\text{diag}(x^{(i)}) Qe_c = 0. \label{coarseLevelErrEq}
\end{align}
$e_c$ is a coarse level approximation of the (unknown) fine level multiplicative error $e^{(i)}$.
I.e. $e_c$ is the multiplicative error of the restriction $Q^Tx^{(i)}$ of the fine level iterate to the (unknown) exact coarse level solution $x_c$.
\begin{align*}
	x_c = \text{diag}(Q^Tx^{(i)}) e_c
\end{align*}

Together this gives the coarse level probability equation
\begin{align}
	Q^TA ~\text{diag}(x^{(i)}) Q \left(\text{diag}(Q^Tx^{(i)})\right)^{-1} x_c = 0 \label{coarseLevelProbEq}
\end{align}
with exact solution $x_c$.
\vspace{1em}

For simplicity of notation we define the prolongation and restriction operators by
\begin{align*}
	R &:= Q^T \in \mathbb{R}^{n_c \times n}\\
	P &:= \text{diag}(x^{(i)})Q \in \mathbb{R}^{n \times n_c}.
\end{align*}

In particular the current iterate $x^{(i)}$ is in the image of $P$.

In terms of grid-transfer operators $R, P$ we define the coarse level operator
\begin{align}
	A_c := RAP \label{coarseLevelOperatorDefinition}
\end{align}
and coarse level stochastic matrix 
\begin{align}
	B_c := RBP \left (\text{diag}(Rx^{(i)}) \right )^{-1}. \label{DefBc}
\end{align}

Now the coarse level multiplicative error equation (\ref{coarseLevelErrEq}) can be formulated by
\begin{align}
	A_c e_c = 0, \label{coarseLevelErrEq2}
\end{align}
and the coarse level probability equation (\ref{coarseLevelProbEq}) has the form
\begin{align}
	A_c \left(\text{diag}(Rx^{(i)}) \right)^{-1} x_c = 0. \label{coarseLevelProbEq2}
\end{align}

\begin{theorem}
\cite[theorem 3.1]{DMM10}
The coarse level matrix $A_c$ is again an irreducible, singular M-matrix. In particular (\ref{coarseLevelErrEq2}) has a unique solution.
\end{theorem}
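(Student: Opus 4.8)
The plan is to reduce the statement entirely to standard M-matrix and Perron--Frobenius theory by exhibiting $A_c$ as an irreducible $Z$-matrix that possesses a strictly positive left null vector. Write $D:=\mathrm{diag}(x^{(i)})$, which is a positive diagonal matrix since the current iterate is strictly positive, so that $A_c = RAP = Q^{T} A D Q$. First I would record the sign structure. Because $B$ is column-stochastic, $A=I-B$ is a $Z$-matrix (off-diagonal entries $-B_{im}\le 0$) with $1^{T}A = 1^{T}-1^{T}B = 0$. For an off-diagonal coarse entry $(k,l)$ with $k\ne l$, the aggregates $\varphi_k$ and $\varphi_l$ are disjoint, so every term $A_{im}\,x^{(i)}_m$ appearing in $(A_c)_{kl}=\sum_{i\in\varphi_k}\sum_{m\in\varphi_l}A_{im}x^{(i)}_m$ has $i\ne m$, hence $A_{im}\le 0$ and $x^{(i)}_m>0$; thus $(A_c)_{kl}\le 0$ and $A_c$ is a $Z$-matrix. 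Next, since each fine index belongs to exactly one aggregate, each row of $Q$ has a single nonzero, so $Q 1_c = 1$ (the all-ones vectors on the coarse and fine levels). Therefore $1_c^{T}A_c = (Q1_c)^{T} A D Q = 1^{T}A D Q = 0$, which shows $A_c$ has $1_c>0$ as a left null vector and, in particular, zero column sums (forcing nonnegative diagonal entries).

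With this in hand I would establish the singular M-matrix property. Set $s:=\max_k (A_c)_{kk}\ge 0$ and $N:=sI-A_c$, so $N\ge 0$ by the $Z$-matrix property. From $1_c^{T}A_c=0$ we get $1_c^{T}N = s\,1_c^{T}$, so $N$ admits the strictly positive left eigenvector $1_c$ with eigenvalue $s$. Pairing this against a nonnegative right Perron eigenvector of $N$ shows $s=\rho(N)$, and hence $A_c=sI-N$ with $N\ge 0$ and $s=\rho(N)$, which is precisely the definition of a singular M-matrix. (Equivalently one may verify the clean factorization $A_c=(I-B_c)\,\mathrm{diag}(Rx^{(i)})$, where $B_c$ is checked to be column-stochastic and nonnegative, and observe that right multiplication by a positive diagonal preserves the M-matrix property.)

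The main obstacle, and the only genuinely combinatorial step, is irreducibility; I would handle it by lifting paths from $B$ to the coarse graph. By the sign computation above, $(A_c)_{kl}\ne 0$ for $k\ne l$ exactly when there exist $i\in\varphi_k$, $m\in\varphi_l$ with $B_{im}>0$, i.e.\ when some edge of the directed graph of $B$ crosses from $\varphi_l$ into $\varphi_k$. Given any two aggregates, choose representatives and use the irreducibility of $B$ to obtain a directed path between them in the graph of $B$; collapsing that path onto the sequence of aggregates it visits yields a directed path in the coarse graph. Hence the graph of $A_c$ (equivalently of $N$) is strongly connected, so $A_c$ is irreducible. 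Care is needed here only to respect the transition direction encoded by column-stochasticity, but the path-lifting argument is insensitive to that convention.

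Finally I would conclude uniqueness from irreducibility. Since $N$ is an irreducible nonnegative matrix with $\rho(N)=s$, the Perron--Frobenius theorem guarantees that $s$ is a simple eigenvalue of $N$; therefore the null space of $A_c=sI-N$ is one-dimensional and spanned by a strictly positive vector. Consequently (\ref{coarseLevelErrEq2}) has a solution that is unique up to scaling, which reproduces on the coarse level exactly the well-posedness property assumed for $A$ on the fine level.
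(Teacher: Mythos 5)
Your proposal is correct, but note that the paper itself never proves this statement: it is quoted verbatim from the reference [DMM10] (theorem 3.1 there), so there is no in-paper argument to compare against. What you have written is a complete, self-contained proof along the standard lines that the cited literature also follows, and every step checks out. The sign analysis correctly exploits that all terms $A_{im}x^{(i)}_m$ in an off-diagonal coarse entry carry the same sign (so no cancellation can hide an edge), the identity $Q1_c = 1$ gives the zero column sums $1_c^T A_c = 0$, and the pairing of the positive left null vector $1_c$ with a nonnegative right Perron vector of $N = sI - A_c$ is a clean way to pin down $s = \rho(N)$ without invoking diagonal dominance arguments. The path-lifting argument for irreducibility is the only step where care is genuinely needed, and you handle it properly: since each off-diagonal coarse entry is a sum of nonpositive terms, $(A_c)_{kl}\ne 0$ exactly when some fine edge crosses between the two aggregates, so collapsing a fine-level path (and discarding steps internal to an aggregate) yields a coarse-level walk; the direction convention is immaterial because a digraph is strongly connected iff its reverse is. Your parenthetical alternative via $A_c = (I_c - B_c)\,\mathrm{diag}(Rx^{(i)})$ is also consistent with the paper's identity (\ref{ABI}) and with the stochasticity computation the paper carries out later in the proof of Theorem \ref{thmcoarseLevelSecondEigenvectorPreserved}. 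One cosmetic remark: the concluding uniqueness claim should be read, as you do read it, as uniqueness up to scaling (equivalently, $\dim\ker A_c = 1$ with a positive spanning vector); the theorem's phrase ``unique solution'' is only meaningful modulo normalization, exactly as for the fine-level problem.
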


We also get
\begin{align}
	\begin{split}
	A_c \left( \text{diag}(Rx^{(i)}) \right)^{-1}
	&= (R I P - R B P ) \left( \text{diag}(R x^{(i)}) \right)^{-1}
	\\
	&= I_c - B_c
	\end{split}
	\label{ABI}
\end{align}
where $I_c \in \mathbb{R}^{n_c\times n_c}$ denotes the identity matrix on the coarse level.

Hence by going from the fine level to the coarse level each aggregate $\varphi_j$ gets assigned the sum of its fine level weights, while going from coarse to fine level we weight the fine level variables of an aggregate according to the relative proportion of the approximated weights $x^{(i)}$.\\

By applying this approach in a recursive manner, we obtain the multilevel V-cycle described in algorithm AM (algebraic aggregation for Markov chains). 

\setlength{\algoheightrule}{0pt}
\setlength{\algotitleheightrule}{0pt}

\begin{algorithm}[H] 
  \SetAlgoLined
  \eIf{not on coarsest level}{
	$x \leftarrow Relax(A,x)$ (pre-relaxation: $\kappa$ smoothing-steps to reduce rough errors)\\
	Build aggregates $\varphi_j$ and aggregation matrix $Q$\\
	$R = Q^T$ and $P = \text{diag}(x)Q$\\
	$A_c = RAP$\\
	$x_c \leftarrow AM(A_c (\text{diag}(Rx))^{-1},Rx)$ (solve recursively on coarse level)\\
	$x \leftarrow P (\text{diag}(Rx))^{-1}x_c$ (correct fine level iterate by coarse level approximation)\\
	$x \leftarrow Relax(A,x)$ (post-relaxation: $\kappa$ smoothing-steps to reduce rough errors)
  }{
  	$x \leftarrow Solve(A,x)$ (solve $Ax = 0$ directly with an exact solver)
  }
  \caption{$AM(A,x)$}
\end{algorithm}

\begin{theorem}[Fixed point property] \label{thmFixedPointAggregation}
\cite[theorem 3.2]{DMM10}
The exact solution $x$ is a fixed point of the multilevel V-cycle, i.e. $AM(A,x) = x$.
\end{theorem}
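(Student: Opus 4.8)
The plan is to prove the claim by induction on the number of levels of the V-cycle, tracing $AM(A,x)$ applied to the exact solution and checking that every operation leaves $x$ invariant. For the base case, on the coarsest level the algorithm calls $Solve(A,x)$, which returns the exact normalized kernel vector of $A$; since the input is already this vector, it is returned unchanged. For the relaxation steps I would first note that both admissible smoothers fix the exact solution: power iteration gives $Bx = x$, while damped Jacobi gives $x - \omega D^{-1}Ax = x$ because $Ax = 0$. Hence pre- and post-relaxation act as the identity on $x$, and the whole argument reduces to the coarse-grid correction.

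The heart of the proof is to show that the coarse initial iterate $Rx$ is \emph{itself} the exact solution of the coarse-level problem, so that the recursive call returns it unchanged. Using $P = \text{diag}(x)Q$ together with $Q\mathbf{1}_c = 1$ (each fine index belongs to exactly one aggregate) I get $P\mathbf{1}_c = \text{diag}(x)\,Q\mathbf{1}_c = x$, and since $(\text{diag}(Rx))^{-1}(Rx) = \mathbf{1}_c$ I compute
\begin{align*}
	A_c\left(\text{diag}(Rx)\right)^{-1}(Rx) = A_c\mathbf{1}_c = RAP\mathbf{1}_c = RAx = 0,
\end{align*}
where the last equality uses $Ax = 0$. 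Thus $Rx$ lies in the kernel of the operator $A_c(\text{diag}(Rx))^{-1} = I_c - B_c$ from (\ref{ABI}) that is handed to the recursive call. By the preceding theorem this operator is again an irreducible, singular M-matrix, so its kernel is one-dimensional and $Rx$ is the unique coarse solution up to scaling. Applying the induction hypothesis to the strictly coarser recursive V-cycle then yields $x_c = Rx$.

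It remains to substitute this back into the correction step, which gives
\begin{align*}
	P\left(\text{diag}(Rx)\right)^{-1}x_c = P\left(\text{diag}(Rx)\right)^{-1}(Rx) = P\mathbf{1}_c = x,
\end{align*}
so the fine iterate is recovered exactly; together with the invariance of $x$ under post-relaxation this establishes $AM(A,x) = x$.

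I expect the main obstacle to be keeping the induction well posed rather than the algebra: one must confirm that at every level the recursively constructed operator retains the structure $I_c - B_c$ with $B_c$ column-stochastic, so that the hypotheses of the preceding theorem — and hence irreducibility, singularity, and the one-dimensionality of the kernel — persist all the way down. Equation (\ref{ABI}) and that theorem supply exactly this, after which the proof collapses to the two short identities $P\mathbf{1}_c = x$ and $(\text{diag}(Rx))^{-1}(Rx) = \mathbf{1}_c$ used above.
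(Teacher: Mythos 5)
Your proof is correct, and since the paper itself gives no inline proof (it defers entirely to \cite[theorem 3.2]{DMM10}), your argument --- relaxation fixes $x$ because $Ax=0$, the identities $Q1_c = 1$, $P1_c = x$ and $A_c 1_c = RAx = 0$ show that $Rx$ is the exact solution of the coarse problem $(I_c - B_c)x_c = 0$ handed to the recursive call, uniqueness of that coarse solution from the irreducible singular M-matrix property, and induction on the number of levels --- is precisely the standard argument from that reference. Your closing concern is also well placed: the persistence of the structure $I_c - B_c$ with $B_c$ column-stochastic (equation (\ref{ABI}) together with the preceding theorem) is exactly what keeps the induction well posed at every level.
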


\section{Deflated Square and Stretch Aggregation Multilevel for the Second Eigenvector}

In this chapter we introduce the new method to compute the second eigenvector of a stochastic matrix using a multilevel aggregation approach.
I.e. we want to solve (\ref{Hauptgleichung}) where we assume that the eigenvalues fulfill (\ref{AssumptionOnEigenvalues}).
Recall that $\lambda_2$ is not known a priori.

\subsection{Wieland Deflation}
\label{chapterWielandDeflation}
Consider the deflated matrix
\begin{align}
	B_1 := B - \mu v^{(1)} u^T, \label{deflatedMatrix}
\end{align}
where $u \in \mathbb{R}^n$ is some (for now arbitrary) deflation vector with $(v^{(1)},u)_{\mathbb{R}^n} = 1$ and $\mu \in \mathbb{R}$ is some (for now arbitrary) shift. Then the eigenvalues of $B_1$ are the same as those of $B$, except for the eigenvalue $\lambda_1$ that gets shifted by $\mu$.
 
\begin{lemma}[Wielandt deflation] \label{thmWielandtDeflation}
\cite[theorem 4.2]{Saa11}
Let $\{ v^{(1)}=\pi, v^{(2)}, \cdots , v^{(n)} \}$ denote the right eigenvectors corresponding to the eigenvalues $\{ \lambda_1, \lambda_2, \cdots, \lambda_n \}$ of a column-stochastic matrix $B$. 
Then the deflated matrix $B_1$ has the spectrum
\begin{align*}
	\sigma(B_1) = \{ \lambda_1 - \mu, \lambda_2 , \cdots ,\lambda_n \}.
\end{align*}
Moreover $B_1$ has the same left eigenvectors $u^{(i)}$ for $i=2,\cdots, n$ as $B$ and right eigenvectors of the form
\begin{align*}
	\tilde{v}^{(i)} = v^{(i)} - \gamma_i v^{(1)} 
\end{align*}
for $i=2, \cdots, n$ with $\gamma_i = \gamma_i(u) := \frac{u^Tv^{(i)}}{1-(\lambda_1-\lambda_i)/\mu}$ depending on the deflation vector $u$.
\end{lemma}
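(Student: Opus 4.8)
The plan is to establish all three assertions by directly exhibiting a full set of eigenpairs of $B_1$, exploiting that it differs from $B$ only through the rank-one term $\mu v^{(1)} u^T$ in (\ref{deflatedMatrix}). Throughout I would use the right and left eigenrelations $B v^{(i)} = \lambda_i v^{(i)}$ and $u^{(i)T} B = \lambda_i u^{(i)T}$, together with the crucial structural fact that right and left eigenvectors belonging to distinct eigenvalues are biorthogonal, so that $u^{(i)T} v^{(1)} = 0$ for all $i \ge 2$ (this uses $\lambda_i \ne \lambda_1$, which is guaranteed by (\ref{AssumptionOnEigenvalues})).

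First I would handle the shifted eigenvalue. Applying $B_1$ to $v^{(1)}$ and using the normalization $u^T v^{(1)} = 1$ gives
\begin{align*}
	B_1 v^{(1)} = B v^{(1)} - \mu v^{(1)}(u^T v^{(1)}) = (\lambda_1 - \mu) v^{(1)},
\end{align*}
so $v^{(1)}$ survives as a right eigenvector, now with eigenvalue $\lambda_1 - \mu$. For the left eigenvectors with $i \ge 2$, forming $u^{(i)T} B_1$ and invoking biorthogonality yields
\begin{align*}
	u^{(i)T} B_1 = u^{(i)T} B - \mu(u^{(i)T} v^{(1)}) u^T = \lambda_i u^{(i)T},
\end{align*}
so each such $u^{(i)}$ is unchanged as a left eigenvector, with unchanged eigenvalue $\lambda_i$.

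For the right eigenvectors I would make the ansatz $\tilde{v}^{(i)} = v^{(i)} - \gamma_i v^{(1)}$ and fix $\gamma_i$ by imposing $B_1 \tilde{v}^{(i)} = \lambda_i \tilde{v}^{(i)}$. Expanding the left-hand side with the eigenrelations and the normalization collapses every term to a multiple of $v^{(i)}$ or $v^{(1)}$; the $v^{(i)}$-coefficient matches automatically, and equating the $v^{(1)}$-coefficients against $-\lambda_i \gamma_i$ leaves the scalar equation
\begin{align*}
	\gamma_i\bigl(\mu - (\lambda_1 - \lambda_i)\bigr) = \mu \, (u^T v^{(i)}),
\end{align*}
whose solution is precisely $\gamma_i = (u^T v^{(i)})/(1 - (\lambda_1 - \lambda_i)/\mu)$. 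Finally, to conclude that $\sigma(B_1) = \{\lambda_1 - \mu, \lambda_2, \ldots, \lambda_n\}$, I would observe that the change of basis from $\{v^{(1)}, v^{(2)}, \ldots, v^{(n)}\}$ to $\{v^{(1)}, \tilde{v}^{(2)}, \ldots, \tilde{v}^{(n)}\}$ is unit upper triangular and hence invertible, so the exhibited eigenvectors span the whole space and account for the entire spectrum.

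The main obstacle is not the algebra but keeping the standing hypotheses in view: biorthogonality needs the distinctness encoded in (\ref{AssumptionOnEigenvalues}), the expression for $\gamma_i$ tacitly requires $\mu \ne \lambda_1 - \lambda_i$ so that the shifted eigenvalue does not collide with any $\lambda_i$, and the completeness step presupposes that $B$ possesses a full eigenbasis. Verifying that these conditions hold in the intended setting is where I would be most careful.
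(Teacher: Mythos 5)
The paper itself contains no proof of this lemma --- it is quoted directly from \cite[theorem 4.2]{Saa11} --- so there is no internal argument to compare against; your proof is correct and is essentially the classical verification one finds in Saad. The three computations are right: $B_1 v^{(1)} = (\lambda_1 - \mu)v^{(1)}$ from the normalization $u^T v^{(1)} = 1$; $u^{(i)T}B_1 = \lambda_i u^{(i)T}$ from biorthogonality $u^{(i)T}v^{(1)} = 0$ (valid since (\ref{AssumptionOnEigenvalues}) forces $\lambda_i \ne \lambda_1$ for $i \ge 2$); and the ansatz $\tilde{v}^{(i)} = v^{(i)} - \gamma_i v^{(1)}$, after equating coefficients in the (linearly independent) pair $v^{(1)}, v^{(i)}$, yields $\gamma_i\bigl(\mu - (\lambda_1 - \lambda_i)\bigr) = \mu\, u^T v^{(i)}$, i.e.\ exactly the stated $\gamma_i$. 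Your unit-upper-triangular change of basis then legitimately upgrades the list of eigenpairs to the full spectrum statement. The caveats you flag are genuine but consistent with the lemma's implicit hypotheses: completeness of the eigenbasis is presupposed by the statement itself, and $\mu \ne 0$, $\mu \ne \lambda_1 - \lambda_i$ are already required for the lemma's own formula for $\gamma_i$ to be well defined. In the paper's actual use case (Hotelling's deflation, $u \parallel 1$, $\mu = \lambda_1$), biorthogonality gives $u^T v^{(i)} = 0$, so $\gamma_i = 0$ and the right eigenvectors are preserved outright --- which is precisely the property the paper invokes afterwards.
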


Note that $v^{(1)} u^T \in \mathbb{R}^{n\times n}$ is in general a non-sparse matrix. But we can still employ matrix-vector multiplication by $B_1x = Bx - v^{(1)} (u^T x)$ using one sparse matrix-vector multiplication and one inner product evaluation.
For the special choice of a constant vector $u$, the right eigenvectors $v^{(2)}, \cdots, v^{(n)}$ are also preserved by the deflation. This particular choice $u \parallel 1$ for the deflation vector is also called Hotelling's deflation.
\vspace{1em}

Inspired by the deflation idea we can adapt the Jacobi iteration as follows.
First assume $\lambda >0$ such that the diagonal $D_1 = diag(E)$ of $E:=\lambda I -B_1$ is non-singular. Then Jacobi iteration with $E$ has the form
\begin{align}
	\begin{split}
	x^{(k+1)} &:= x^{(k)} - \omega D_1^{-1}(E x^{(k)})
	\\
	&~=
	x^{(k)} - \omega D_1^{-1} \left( \lambda x^{(k)} - B x^{(k)} + v^{(1)} ( u , x^{(k)} )_{\mathbb{R}^n} \right)
	\end{split}
	\label{JacobiIterationWielandt}
\end{align}
In a similar fashion we can combine power iteration with Hotelling's deflation:
\begin{align}
	x^{(k+1)} &:= B_1 x^{(k)} = B x^{(k)} - v^{(1)}(u^Tx^{(k)}) \label{PowerIterationWielandt}
\end{align}
For both methods we need to normalize after each iteration.

\begin{theorem}
For Hotelling's deflation with deflation vector $u \parallel 1$ and $\mu = \lambda_1$, the iteration (\ref{PowerIterationWielandt}) converges to the largest eigenvector of $B_1$ for any initial vector $x^{(0)} = \sum_{i=1}^n \alpha_i v^{(i)}$ with $\alpha_2 \neq 0$.
 The speed of convergence depends on $\frac{|\lambda_3|}{\lambda_2}$.
\end{theorem}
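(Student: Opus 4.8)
The plan is to reduce the claim to the classical convergence analysis of the power method, once the spectral structure of $B_1$ under Hotelling's deflation has been made fully explicit. First I would invoke Lemma \ref{thmWielandtDeflation} with the specific choices $u \parallel 1$ and $\mu = \lambda_1$. The key fact is that column-stochasticity gives $1^T B = 1^T$, so $1^T$ is the left eigenvector of $B$ for $\lambda_1 = 1$; applying $1^T$ to $B v^{(i)} = \lambda_i v^{(i)}$ yields $(1-\lambda_i)\,1^T v^{(i)} = 0$, hence $1^T v^{(i)} = 0$ for every $i \ge 2$ since $\lambda_i \neq 1$. Because $u$ is parallel to $1$, this forces $u^T v^{(i)} = 0$ for $i \ge 2$, while $u^T v^{(1)} = 1$ by the normalization of the deflation vector. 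A single direct computation then handles all eigenvectors at once,
\[
	B_1 v^{(i)} = B v^{(i)} - \mu v^{(1)} (u^T v^{(i)}) = \lambda_i v^{(i)} - \mu v^{(1)} (u^T v^{(i)}),
\]
which equals $(\lambda_1 - \mu) v^{(1)} = 0$ for $i=1$ and $\lambda_i v^{(i)}$ for $i \ge 2$. Thus $\{v^{(1)}, \dots, v^{(n)}\}$ diagonalizes $B_1$ with eigenvalues $\{0, \lambda_2, \dots, \lambda_n\}$, in agreement with Lemma \ref{thmWielandtDeflation}.

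Next I would expand the iterates in this eigenbasis. Writing $x^{(0)} = \sum_{i=1}^n \alpha_i v^{(i)}$ and applying $B_1^k$ term by term gives, for every $k \ge 1$,
\[
	x^{(k)} = B_1^k x^{(0)} = \alpha_1 0^k v^{(1)} + \sum_{i=2}^n \alpha_i \lambda_i^k v^{(i)} = \lambda_2^k \Bigl( \alpha_2 v^{(2)} + \sum_{i=3}^n \alpha_i (\lambda_i/\lambda_2)^k v^{(i)} \Bigr).
\]
Here the deflation acts exactly: since the deflated eigenvalue is $0$, the $v^{(1)}$-component is annihilated after a single step regardless of the value of $\alpha_1$. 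This is precisely the feature that distinguishes the scheme from ordinary power iteration on $B$, where the $\lambda_1 = 1$ mode would dominate.

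Finally I would pass to the limit. By the eigenvalue ordering (\ref{AssumptionOnEigenvalues}) we have $\lambda_2 > 0$ and $|\lambda_i| \le |\lambda_3| < \lambda_2$ for all $i \ge 3$, so each ratio satisfies $|\lambda_i/\lambda_2| \le |\lambda_3|/\lambda_2 < 1$ and the remainder sum tends to $0$ as $k \to \infty$. Since $\alpha_2 \neq 0$, after the normalization that accompanies each step the iterate converges, up to scaling, to $v^{(2)}$, the dominant eigenvector of $B_1$; the leading neglected term decays like $(|\lambda_3|/\lambda_2)^k$, which pins down the asymptotic convergence factor $|\lambda_3|/\lambda_2$. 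The strict positivity $\lambda_2 > 0$ ensures that no sign oscillation occurs under normalization, so the convergence is genuine rather than merely up to alternating sign.

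I expect the only genuinely delicate step to be the spectral bookkeeping of the first paragraph, specifically verifying that Hotelling's deflation preserves the eigenvectors $v^{(i)}$ \emph{exactly} (through $u^T v^{(i)} = 0$) and that the deflated eigenvalue is exactly $0$ rather than merely small. Once that structure is secured, the remainder is the textbook power-method estimate, and the roles of the hypotheses $\alpha_2 \neq 0$ (a nonvanishing dominant component) and $\lambda_2 > |\lambda_3|$ (the spectral gap controlling the rate) become transparent.
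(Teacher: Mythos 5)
Your proof is correct and follows essentially the same route as the paper: the paper's proof simply cites Lemma \ref{thmWielandtDeflation} for the preservation of $v^{(2)},\dots,v^{(n)}$ and their eigenvalues and then appeals to the classical power-iteration decomposition of Section \ref{sectionRelaxationMethods}, which is exactly the structure of your argument. The only difference is that you fill in details the paper leaves implicit (deriving $u^T v^{(i)}=0$ for $i\ge 2$ from column-stochasticity rather than quoting the lemma, and writing out the geometric decay of the remainder), so your write-up is a more self-contained version of the same proof.
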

\begin{proof}
By lemma \ref{thmWielandtDeflation}, Hotelling's deflation preserves the right eigenvectors and eigenvalues $\lambda_2, \cdots, \lambda_n$. Then the convergence follows by a decomposition of the iterate into eigenvectors similar to the elaboration on classical power iteration in chapter \ref{sectionRelaxationMethods}.
\end{proof}

In particular, as Hotelling's deflation preserves the eigenvalues $\lambda_2, \cdots,\lambda_n$, the smoothing heuristic of deflated power iteration and deflated Jacobi iteration coincides with the heuristic for classical power and Jacobi iteration respectively.
Hence we can use the same multilevel heuristics as for the invariant measure.
\vspace{1em}

For the invariant measure, the corresponding eigenvalue was known a priori, namely $\lambda_1 = 1$. For the second eigenvector, the fine level problem has the form
\begin{align}
	B_1x = \lambda_2 x.
\end{align}
However $\lambda_2$ is not known a priori and we can not simply solve a linear system but instead we have to exploit that $\lambda_2$ is the largest eigenvalue of $B_1$.
That means we are seeking the eigenvector of $B_1$ corresponding to the eigenvalue with largest absolute value.

Also, the deflated matrix $B_1$ is not stochastic and even a full (i.e. non-sparse) matrix in general. We circumvent these difficulties by using the stochastic matrix $B$ for the multilevel setting and then solve for the second largest eigenvector of $B$ with deflated power iteration (\ref{PowerIterationWielandt}).
\vspace{1em}

The second eigenvector contains both positive and negative entries. This might cause a cancellation depending on the choice of aggregates: If positive and negative nodes in an aggregate sum to $0$, $\left(\text{diag}(Q^Tx^{(i)}) \right)^{-1}$ would not be well-defined.
Hence it is essential that we only aggregate elements that have the same sign in the approximation $x^{(i)}$ to the second eigenvector. Then the matrix 
\begin{align*}
	\text{diag}(x^{(i)}) Q \left(\text{diag}(Q^Tx^{(i)}) \right)^{-1}
\end{align*}
used to prolongate the coarse level approximation to the improved fine level iterate is non-negative.
In particular the coarse level probability matrix
\begin{align}
	B_c := Q^T B \text{ diag}(x^{(i)}) Q \left(\text{diag}(Q^Tx^{(i)}) \right)^{-1}. \label{coarseLevelMatrixSecondEV}
\end{align}
remains a non-negative stochastic matrix even though the prolongation operator
\begin{align*}
	P = \text{diag}(x^{(i)}) Q
\end{align*}
itself has both positive and negative entries.
From an intuitive viewpoint and for the exact solution, the coarse level problem should then preserve the second eigenvector as only nodes with the same sign in the second eigenvector are aggregated and weighted according to ratio in the second eigenvector. Hence the aggregation respects the slowest process of the Markov chain and does not distort it.
Our intuition is indeed consistent with the following theorem.

\begin{theorem} \label{thmcoarseLevelSecondEigenvectorPreserved}
Let $x$ be an eigenvector of the stochastic fine-level matrix $B$ with the eigenvalue $\lambda_2$. Moreover let the aggregates $\varphi_p$ be chosen such that $x_k x_l > 0$ for all $k,l \in \varphi_p$ for all aggregates $\varphi_p$. Then, for the exact solution $x^{(i)} = x$, the coarse level matrix $B_c$ defined in (\ref{coarseLevelMatrixSecondEV}) is a stochastic matrix with right eigenvector $Q^Tx$ corresponding to the eigenvalue $\lambda_2$.
\end{theorem}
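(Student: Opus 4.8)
The plan is to verify the two claims separately: that $Q^T x$ is a right eigenvector of $B_c$ for the eigenvalue $\lambda_2$, and that $B_c$ is a (non-negative, column-normalized) stochastic matrix. Both reduce to short manipulations once the sign hypothesis is used to guarantee that $\text{diag}(Q^T x)$ is invertible, so I would first record two elementary facts about the aggregation matrix. Since the aggregates partition $\{1,\dots,n\}$, each row of $Q$ has exactly one nonzero entry, giving $Q 1_c = 1$, where $1_c$ and $1$ denote the all-ones vectors on the coarse and fine levels. And for any vector $v$ with no zero entries we have $\left(\text{diag}(v)\right)^{-1} v = 1$ and $v^T \left(\text{diag}(v)\right)^{-1} = 1^T$. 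The hypothesis $x_k x_l > 0$ for all $k,l \in \varphi_p$ guarantees that each coarse component $(Q^T x)_p = \sum_{k \in \varphi_p} x_k$ is a sum of terms of a common sign and hence nonzero, so $\text{diag}(Q^T x)$ is invertible and these identities apply with $v = Q^T x$.

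For the eigenvector property I would compute $B_c(Q^T x)$ directly from (\ref{coarseLevelMatrixSecondEV}) with $x^{(i)} = x$. The rightmost factor collapses via $\left(\text{diag}(Q^T x)\right)^{-1}(Q^T x) = 1_c$, then $Q 1_c = 1$ and $\text{diag}(x)\,1 = x$ leave $B_c(Q^T x) = Q^T B x$; the relation $Bx = \lambda_2 x$ then yields $B_c(Q^T x) = \lambda_2 Q^T x$. Column-normalization is handled by the mirror-image computation from the left: $1_c^T Q^T = (Q 1_c)^T = 1^T$, followed by $1^T B = 1^T$ (column-stochasticity of $B$) and $1^T \text{diag}(x) = x^T$, reduces $1_c^T B_c$ to $(Q^T x)^T \left(\text{diag}(Q^T x)\right)^{-1}$, which equals $1_c^T$ by the second identity above.

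Non-negativity is the one place where the sign hypothesis does essential work, and together with the invertibility issue above it is the conceptual crux of the statement rather than a purely mechanical check. I would factor $B_c = Q^T B \cdot \text{diag}(x) Q \left(\text{diag}(Q^T x)\right)^{-1}$; the leading factor $Q^T B$ is non-negative because $Q$ and $B$ are, so it suffices to show the trailing operator is non-negative. Its $(i,p)$ entry vanishes unless $i \in \varphi_p$, in which case it equals $x_i / \sum_{k \in \varphi_p} x_k$, and by the sign condition the numerator and denominator share a sign, making the ratio positive. Combining this with the column-sum identity shows $B_c$ is stochastic, which completes the proof.

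I expect no serious difficulty: every step is a one-line identity, and the only subtlety is to invoke the sign hypothesis at precisely the two points where it is needed — to justify that $\text{diag}(Q^T x)^{-1}$ exists and to obtain non-negativity of the prolongation-type operator.
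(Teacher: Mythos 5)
Your proposal is correct and follows essentially the same route as the paper: the identical one-line computations $1_c^T B_c = 1_c^T$ (column-stochasticity via $1^T B = 1^T$) and $B_c(Q^T x) = \lambda_2 Q^T x$ (collapsing $\left(\text{diag}(Q^T x)\right)^{-1}(Q^T x) = 1_c$ and $Q 1_c = 1$). The only difference is that you spell out explicitly what the paper delegates to its ``preceding remarks'' --- namely the invertibility of $\text{diag}(Q^T x)$ and the entrywise non-negativity of $\text{diag}(x)\,Q\left(\text{diag}(Q^T x)\right)^{-1}$ under the sign hypothesis --- which is a welcome tightening rather than a different argument.
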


\begin{proof}
By the	preceding remarks, the assumption on the aggregates ensures that $B_c$ is indeed a non-negative matrix. Moreover
\begin{align*}
	1_c^T B_c
	&= 1_c^T Q^T B \text{ diag}(x) Q \left(\text{diag}(Q^Tx) \right)^{-1}\\
	&= 1^T \text{ diag}(x) Q \left(\text{diag}(Q^Tx) \right)^{-1}\\
	&= x^T Q \left(\text{diag}(Q^Tx) \right)^{-1}\\
	&= (Qx)^T \left(\text{diag}(Q^Tx) \right)^{-1} = 1_c.
\end{align*}
Now simple calculations show that $Q^Tx$ is indeed an eigenvector.
\begin{align*}
	B_c (Q^Tx) &= Q^T B \text{ diag}(x) Q \left(\text{diag}(Q^Tx) \right)^{-1} (Q^Tx)\\
	&= Q^T B \text{ diag}(x) Q 1_c\\
	&= Q^T B \text{ diag}(x) 1 = Q^T B x = \lambda_2 (Q^Tx)
\end{align*}
\end{proof}

The resulting multilevel DAM (Wieland deflation with algebraic aggregation for Markov chains) V-cycle is described in algorithm \ref{AlgoSecondEigenvectorSimpleAggregation}.
First we compute the invariant  measure using a separate multilevel square and stretch algorithm from \cite{TY10} which is a modification of the algorithm described in section \ref{chapterAggregationMultilevelMethods}. Then the invariant measure is used in the relaxation method for a Hotelling's deflation for the modified power iteration (\ref{PowerIterationWielandt}). This step is denoted by $RelaxWieland(B,x,v)$.\\
The deflation happens only in the pre- and post-relaxation, while the coarse level probability matrix is still defined based on $B$. 
On the coarsest level, we apply deflated power iteration until the residual is close to machine precision.
This step is denoted by $Solve(B,x,v)$.

\begin{algorithm}[H] 
  \SetAlgoLined
  \eIf{not on coarsest level}{
  	$v \leftarrow S\&SM(B)$ (solve for the first eigenvector with the algorithm described in \cite{TY10})\\
	$x \leftarrow RelaxWielandt(B,x,v)$ (pre-relaxation)\\
	Build aggregation matrix $Q$ (aggregate only nodes with same sign)\\
	$R = Q^T$ and $P = \text{diag}(x)Q$\\
	$B_c = R B P (\text{diag}(R x) )^{-1} $\\
	$x_c \leftarrow DAM(B_c ,Rx)$ (solve recursively on coarse level)\\
	$x \leftarrow P (\text{diag}(R x) )^{-1} x_c$ (correct fine level iterate by coarse level approximation)\\
	$x \leftarrow RelaxWielandt(B,x,v)$ (post-relaxation)
  }{
  	$x \leftarrow Solve(B,x,v)$ (Solve $Bx = \lambda_2 x$ with plain relaxation)
  }
  \caption{$DAM(B,x)$}
  \label{AlgoSecondEigenvectorSimpleAggregation}
\end{algorithm}

Theorem \ref{thmcoarseLevelSecondEigenvectorPreserved} tells us that, for the exact solution, the second eigenvector $x$ on the fine level is preserved on the coarse level.
However for a fixed point property of the multilevel algorithm we additionally require that $(Q^Tx)$ is also the \textit{second} largest eigenvector on the coarse level.
This would be fulfilled if coarsening does not generate a new eigenvalue $\lambda_2 < \tilde{\lambda} < 1$ of $B_c$ that dominates $\lambda_2$.\\
From an intuitive viewpoint this is what we expect as aggregating some nodes should preserve the dynamics of the remaining nodes, in particular if the nodes are aggregated based on the strength of connection and the ratio in $x$.
\vspace{1em}

\textbf{Assumption on slow process.} If $x$ is the second eigenvector of $B$, assume that $(Rx)$ is the \textit{second} eigenvector of $B_c$.
\vspace{1em}

In the numerical results we observe that the assumption holds for all test problems and aggregates chosen by the bottom-up approach.
However, for arbitrary stochastic matrices and arbitrary aggregates, the assumption on slow processes does not hold in general. There are even counterexamples for small matrices ($n=5$) where the coarse level matrix has another eigenvalue that is larger than the second eigenvalue of the fine matrix.
It appears that this assumption is a property of the aggregation algorithm used in the multilevel framework.

\begin{theorem}
Assume the assumption on slow processes holds. Then the second eigenvector $x$ of $B$ is a fixed point of DAM.
\end{theorem}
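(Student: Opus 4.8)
The plan is to mirror the proof of the fixed point property for the invariant measure (Theorem \ref{thmFixedPointAggregation}) and to trace the exact second eigenvector $x$ through one sweep of the recursive V-cycle in algorithm \ref{AlgoSecondEigenvectorSimpleAggregation}, arguing by induction on the number of levels. The base case is the coarsest level, where $Solve(B,x,v)$ applies deflated power iteration (\ref{PowerIterationWielandt}); I first show that this iteration leaves the exact eigenvector invariant. For the inductive step I then verify that, on each level, pre-relaxation, the coarse grid correction, and post-relaxation each return $x$, using the inductive hypothesis for the recursive call on the coarse level.

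First I would establish that $RelaxWielandt(B,x,v)$ fixes $x$. The decisive observation is that, since $B$ is column-stochastic, the constant vector is a left eigenvector for $\lambda_1=1$, whence $1^Tx=0$ for any right eigenvector $x$ belonging to $\lambda_2\neq 1$. With Hotelling's deflation ($u\parallel 1$) the deflation term $v^{(1)}(u^Tx)$ in (\ref{PowerIterationWielandt}) then vanishes \emph{irrespective} of the precise deflation vector $v$, so that $B_1x=Bx=\lambda_2 x$. Because $\lambda_2>0$ by (\ref{AssumptionOnEigenvalues}), the subsequent normalization returns a positive multiple of $x$, so $x$ is fixed for the chosen normalization convention (and the eigendirection is preserved in any case). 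The identical argument applies verbatim to the coarsest-level $Solve$, settling the base case.

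Next I would treat the coarse level. Theorem \ref{thmcoarseLevelSecondEigenvectorPreserved} guarantees that, for the exact solution, $B_c$ from (\ref{coarseLevelMatrixSecondEV}) is stochastic and that $Rx=Q^Tx$ is its right eigenvector for $\lambda_2$; the assumption on slow processes upgrades this to the statement that $Rx$ is the \emph{second} eigenvector of $B_c$. Thus the coarse problem has exactly the same structure as the fine one, and by the inductive hypothesis the recursive call returns $DAM(B_c,Rx)=Rx$, i.e.\ $x_c=Q^Tx$. Substituting this into the coarse grid correction reduces it to the purely algebraic identity
\begin{align*}
	P\left(\text{diag}(Rx)\right)^{-1}x_c
	= \text{diag}(x)\,Q\left(\text{diag}(Q^Tx)\right)^{-1}Q^Tx
	= \text{diag}(x)\,Q\,1_c
	= \text{diag}(x)\,1 = x,
\end{align*}
where $\left(\text{diag}(Q^Tx)\right)^{-1}Q^Tx=1_c$ is well defined precisely because the same-sign aggregation makes every entry of $Q^Tx$ nonzero, and $Q1_c=1$ since each fine node lies in exactly one aggregate. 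A final application of the relaxation result shows that post-relaxation again fixes $x$, completing the step.

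The two algebraic identities are routine; the part requiring care is the interface between Theorem \ref{thmcoarseLevelSecondEigenvectorPreserved} and the recursion. Theorem \ref{thmcoarseLevelSecondEigenvectorPreserved} only yields that $Rx$ is \emph{an} eigenvector of $B_c$ for $\lambda_2$, whereas the recursive $DAM$ call targets the second eigenvector; the assumption on slow processes is exactly what closes this gap, so I expect the cleanest formulation of the induction to invoke it at every level, thereby ensuring that the recursive hypothesis genuinely applies to the pair $(B_c,Rx)$. I would also state explicitly the normalization convention under which ``fixed point'' is meant, since eigenvectors are determined only up to scaling.
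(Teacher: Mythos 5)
Your proposal is correct and follows essentially the same route as the paper, whose proof is simply a pointer to the preceding reasoning: Theorem \ref{thmcoarseLevelSecondEigenvectorPreserved} plus the assumption on slow processes guarantee the recursive call sees the exact coarse eigenvector $Rx$, and the prolongation identity $P(\text{diag}(Rx))^{-1}Rx = x$ together with invariance of $x$ under the deflated relaxation (via $1^Tx=0$) close the argument. Your write-up merely makes explicit what the paper leaves implicit, including the correct observation that the slow-process assumption must be invoked on every level of the recursion.
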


\begin{proof}
This follows by the previous reasoning.
\end{proof}

For the multilevel V-cycle it remains to construct aggregates that satisfy the assumption $x_k x_l > 0$ for all $k,l\in \varphi_p$.
To this end we use the same bottom-up approach as for the classical method in \cite{TY10} where the strength of connection is defined by $B\text{ diag}( (x^{(k)})^+)$.
But when constructing the connectivity matrix $S$, we set those entries at $(k,l)$ to zero, where $x_k x_l \le 0$.
\vspace{1em}

\textbf{Some tricks to improve convergence.}
For the relaxation method, we additionally use Chebyshev iteration to influence the smoothing effect of the deflated power iteration. For a polynomial $p \in \mathbb{R}[x]$, Chebyshev iteration computes a vector of the form $x^{(k+1)} = p(B_1)x^{(k)}$. Here the polynomial $p$ should be chosen such that the desired eigenmodes are amplified, while others are damped.
Consider a splitting of the initial vector $x^{(k)} = \sum_{i=1}^n \alpha_i v^{(i)}$ into eigenvectors of $B_1$. Then it holds
\[
	p(B_1)x^{(k)} = \sum_{i=1}^n \alpha_i p(B_1)v^{(i)} = \sum_{i=1}^n \alpha_i p(\lambda_i) v^{(i)}. 
\]
To this end, we seek a polynomial which is large at $\lambda_2$ and small on the set $\{ \lambda_3, \cdots, \lambda_n \}$. Without the knowledge of the eigenvalues such a polynomial is not possible to compute. In practice however we can use e.g. third order polynomials with two zeros in $[-1,0]$ that hence strongly damp the rough eigenmodes.
Then coarse level correction reduces the smooth eigenmodes more effectively.
\vspace{1em}

To speed up the convergence in the first V-cycle iteration (only in the first!) it turns out to be more efficient if we use the matrix $B\text{ diag}(v^{(1)})$ to define the strength of connection and the prolongation operator
\[
	\text{diag}(v^{(1)})Q \left(\text{diag}(Q^Tv^{(1)}) \right)^{-1}
\]
to compute the fine level approximation from the coarse level iterate.
The reason behind this is that in the first iteration the (known) first eigenvector provides a good classification of the smooth errors, while the initial approximation $x^{(1)}$ to the second eigenvector still has a large error.\\
Once the iterate $x^{(k)}$ becomes sufficiently accurate, the strength of connection and the definition of the prolongation operator is based on the iterate $x^{(k)}$ as described in algorithm DAM.\\

In figure \ref{figureWielandtMultilevelVisualization} we see how the multilevel approach speeds up the convergence of plain vector iteration. In the first V-cycle there is a drastic improvement of the residual due to the coarse grid correction. As explained in the previous paragraph, this is due to the knowledge of the first eigenvector up to machine precision.\\
In the subsequent iterations, the coarse level correction introduces new errors that are then effectively reduced by the first post-relaxations.
This implies the coarse level correction improves the smooth error components, but introduces new rough error modes which then require damping by the post-relaxation steps.
All in all the residual is significantly reduced due to the coarse level correction.
For larger matrices however, the method becomes less efficient as the convergence rated increases for larger $n$.
This was also a property of the classical AM-algorithm for the first eigenvector.

\begin{figure}
\centering
 \includegraphics[scale=0.65, trim = 0 0 0 0]{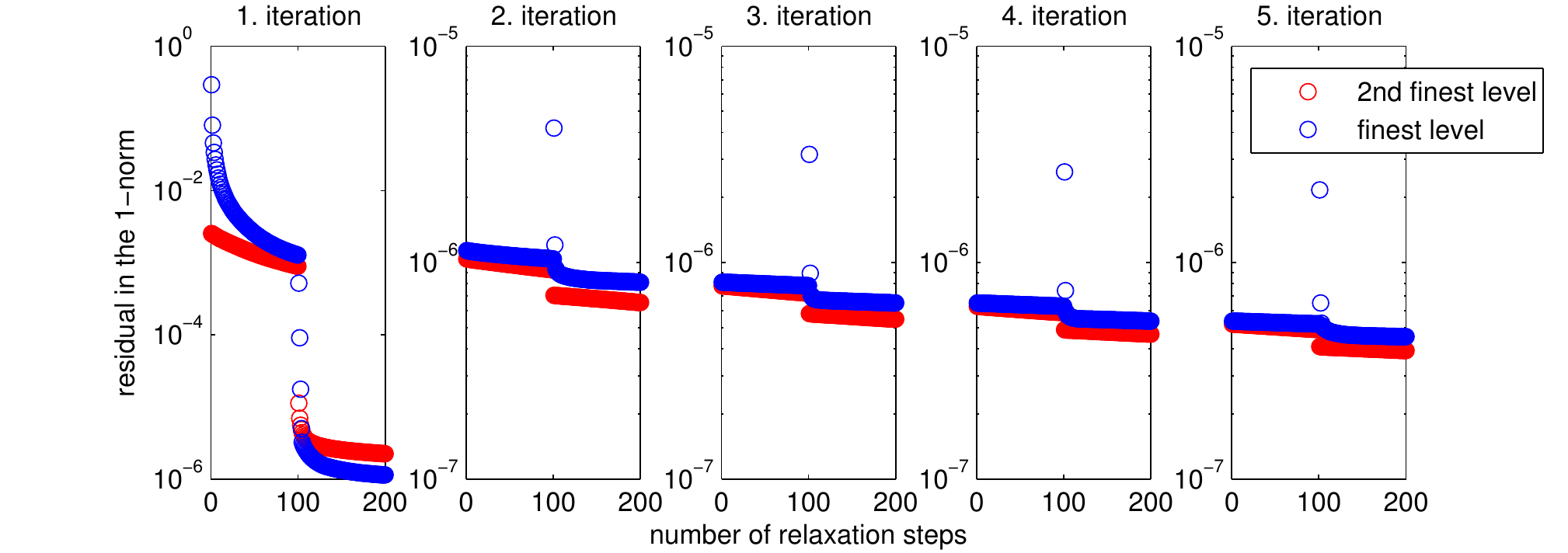}
 \caption{Illustration of multilevel improvement of DAM on the uniform chain matrix for $n=1024$ with 100 pre- and post-relaxation steps. The blue dots denote the residual in the one norm on the fine level after each relaxation step. After 100 pre-relaxation steps the coarse level correction is performed, then there are 100 post-relaxation steps. The red dots denote the residual on the second finest level in the V-cycle. (In total there are 6 levels in the V-cycle.)}
\label{figureWielandtMultilevelVisualization}
\end{figure}

\section{Square and Stretch}
In \cite{TY10, DMM10} it was observed that the simple grid-transfer operators lead to slow convergence of AM for the invariant measure. This was explained by the observation that rough errors are generated by $P$ and not damped properly by $R$. In \cite{DMM10} De Sterck et al propose smoothing the prolongation and restriction operator to improve the convergence. A square and stretch approach where we can still use the same grid-transfer operators but reduce the rough eigenmodes on the coarse level has been proposed in \cite{TY10} by Treister and Yavneh. In this section we will apply the square and stretch approach in a similar fashion to improve the convergence of DAM.\\

Regarding the first eigenvector, a matrix of the form $\hat{B} = \frac{1}{1-d}B - \frac{d}{1-d}I$ was considered where $d \in (0,1)$ is a stretching parameter. This changes the spectrum of the matrix in an advantageous way (see \cite{TY10}).\\

Here however, the eigenvalue $\lambda_2$ of interest is smaller than $1$. Thus if there is an eigenvalue $\lambda_k$ close to $0$ we may observe
\[
	\left| \frac{\lambda_k^2}{(1-d)} - \frac{d}{(1-d)} \right| > \left| \frac{\lambda_2^2}{(1-d)} - \frac{d}{(1-d)} \right|.
\]
Hence after the square and stretch transformation, there would be a different eigenvector whose eigenvalue has largest absolute value. This would distort the method.

There are two remedies: The first approach is to use pre- and post-relaxations such that the eigenvalues close to $0$ are eliminated up to machine precision. Then they do not distort the convergence.\\
The second approach is to use a shifting parameter $p \ge 1-\lambda_2$ that shifts the spectrum before squaring. The spectrum of the resulting matrix
\begin{align*}
	\hat{B} := \frac{1}{1-d} \left(\frac{1}{(1+p)^2}(B + pI)^2\right) - \frac{d}{1-d}I
\end{align*}
is still contained in the interval $[-1,1]$ but now $\lambda_2$ remains the dominant second eigenvalue in terms of largest absolute value. Of course the shifting parameter $p$ is not known in advance. Numerically we may use the approximation of the iterate to $1-\lambda_2$ with some multiplicative error correction.

Algorithm DS\&SM (Wieland deflation with square and stretch for Markov chains) describes the resulting multilevel V-cycle. In figure \ref{figureWielandtMultilevelVisualizationSquareStretch} we see that the square and stretch transformation drastically improves the convergence compared to the simple aggregation procedure depicted in figure \ref{figureWielandtMultilevelVisualization}.
On the other hand we observe that the coarse level correction now significantly introduces new errors.
Hence we require more relaxation steps to damp the new errors. The steep descent of the residual in the first post-relaxation steps indicates that coarse level correction is quite effective in reducing the smooth error components and that the new errors generated by the coarse level are only rough modes.

\begin{algorithm}[H] 
  \SetAlgoLined
  \eIf{not on coarsest level}{
  	$v \leftarrow S\&SM(B)$ (solve for the first eigenvector with the algorithm described in \cite{TY10})\\
	$x \leftarrow RelaxWielandt(B,x,v)$ (pre-relaxation)\\
	Build aggregates $\varphi_j$ and aggregation matrix $Q$\\
	$R = Q^T$ and $P = \text{diag}(x)Q$\\
	Determine shifting parameter $p$\\
	Determine stretching parameter $d$\\
	$\hat{B}_c = R \left( \frac{1}{1-d} \frac{1}{(1+p)^2} (B + pI)^2 - \frac{d}{1-d} I \right) P (\text{diag}(R x) )^{-1} $\\
	$x_c \leftarrow DS\&SM(\hat{B}_c ,Rx)$ (solve recursively on coarse level)\\
	$x \leftarrow P (\text{diag}(Rx))^{-1} x_c$ (correct fine level iterate by coarse level approximation)\\
	$x \leftarrow RelaxWielandt(B,x,v)$ (post-relaxation)
  }{
  	$x \leftarrow Solve(B,x,v)$ (Solve $Bx = \lambda_2 x$ directly)
  }
  \caption{$DS\&SM(B,x)$}
  \label{AlgoSecondEigenvectorSquareStretch}
\end{algorithm}

This algorithm has the same fixed point property as DAM.

\begin{figure}
\centering
 \includegraphics[scale=0.65, trim = 0 0 0 0]{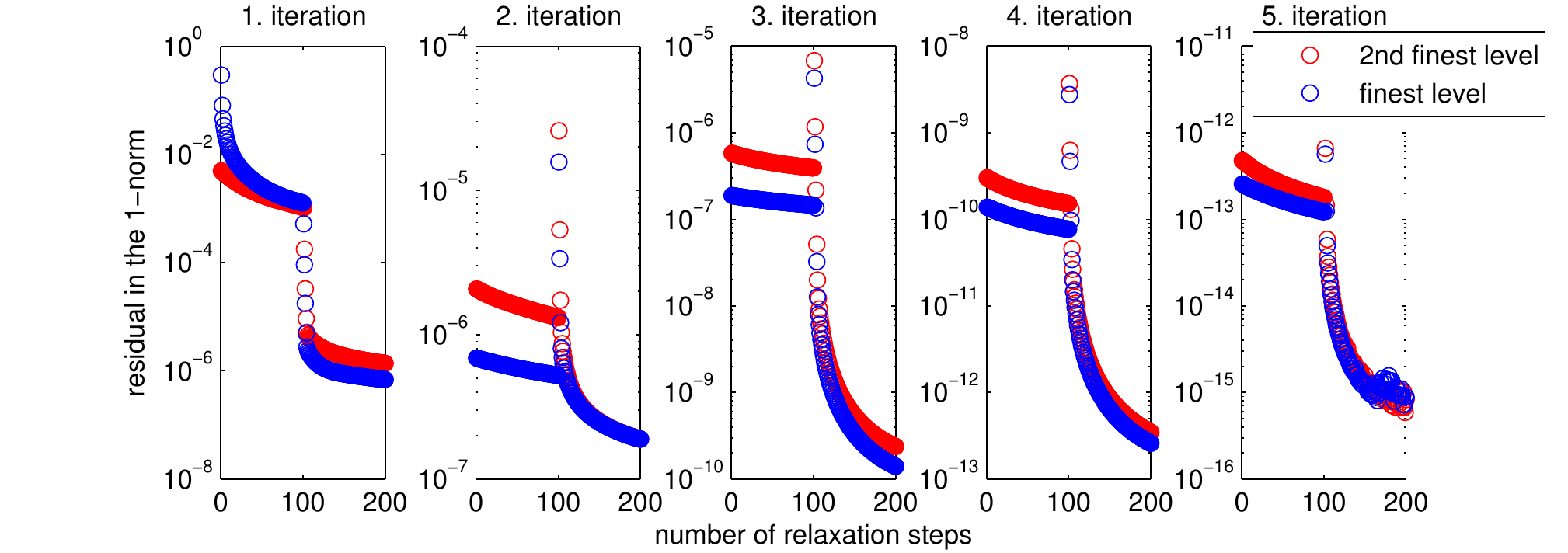}
 \caption{Illustration of the multilevel improvement of DS\&SM on the uniform chain matrix for $n=1024$ with 100 pre- and post-relaxation steps. The blue dots denote the residual in the one norm on the fine level after each relaxation step. After 100 pre-relaxations the coarse level correction is performed, then there are 100 post-relaxation steps. The red dots denote the residual on the second finest level in the V-cycle.}
\label{figureWielandtMultilevelVisualizationSquareStretch}
\end{figure}

\section{Numerical Results}
The choice of test problems was motivated by the examples used in \cite{DMM10, TY10}. Also the numerical parameters are chosen alike. $\gamma$ denotes the convergence rate of a V-cycle. $it$ denotes the number of steps to reduce the residual by factor of $10^{-10}$. $C_{op}$ denotes the operator complexity, i.e. the number of nonzero elements on all levels divided by the number of non-zero elements on the fine level.
$\gamma_{\text{eff}} = \left((10^{-10})^{1/it} \right)^{1/C_{op}}$ denotes the effective convergence factor to measure the overall effectiveness of the method. $lev$ denotes the number of levels of a V-cycle.
The parameter $\theta$ is used in the bottom-up aggregation scheme described in \cite{TY10}.

\subsection{Uniform Chain}
\label{matrixUniformChain}

The simplest test matrix we consider is the uniform chain matrix. Figure \ref{figUniformChain} illustrates the graph of the 1D chain with uniform weights. The connectivity matrix is symmetric with non-zero entries just above and below the main diagonal. The resulting transition matrix has eigenvalues that are almost uniformly distributed in the interval $[0,1]$.\\
Aggregates of size $s=2$ with aggregation parameter $\theta = 0.1$ are chosen.
For DS\&SM using $d=\min(\text{diag}(A))$ or $d=0.5$ gives the same results, as the diagonal elements are approximately $0.5$.
DS\&SM and DAM require a larger number of smoothing steps compared to AM in order to damp the rough error modes generated by the coarse level correction. We use 100 pre- and post-relaxations where each relaxation consists of 3 matrix-vector multiplications.\\
In table \ref{tableUniformChain2} we see that DS\&SM performs extremely well with bounded operator complexity and low convergence factors. Just as for the invariant measure, the square and stretch transformation significantly increases the performance.\\
In fact, despite the high number of relaxation steps, the good convergence factor of DS\&SM is the result of coarse level improvement together with relaxations.
If we consider only relaxation steps without the multilevel correction, the residual is merely reduced with a convergence factor of $0.999$ for 200 relaxation steps on a 16384x16384 uniform chain matrix.

\begin{figure}[]
\scalebox{0.6}{
\begin{tikzpicture}[->,>=stealth',shorten >=1pt,auto,node distance=1.8cm,
                    thick,main node/.style={circle,draw,font=\sffamily\Large\bfseries}]
  \coordinate (cone) at (0,0);
  \node[main node, minimum size = 0cm, draw=none] (0) [] {}; 
  \node[main node, minimum size = 0.5cm] (1) at (0,5) {};
  \node[main node, minimum size = 0.5cm] (2) [right of=1] {};
  \node[main node, minimum size = 0.5cm] (3) [right of=2] {};
  \node[main node, minimum size = 0.5cm] (4) [right of=3] {};
  \node[main node, minimum size = 0.5cm] (5) [right of=4] {};

  \path[every node/.style={font=\sffamily\small}]
    (1) edge [left] node[above] {} (2)
    (2) edge [left] node[above] {} (3)
        edge [left] node[below] {} (1)
    (3) edge [left] node[above] {} (4)
        edge [left] node[below] {} (2)
    (4) edge [left] node[above] {} (5)
        edge [left] node[below] {} (3)
   	(5) edge [left] node[below] {} (4);
\end{tikzpicture}}
\includegraphics[scale=0.4]{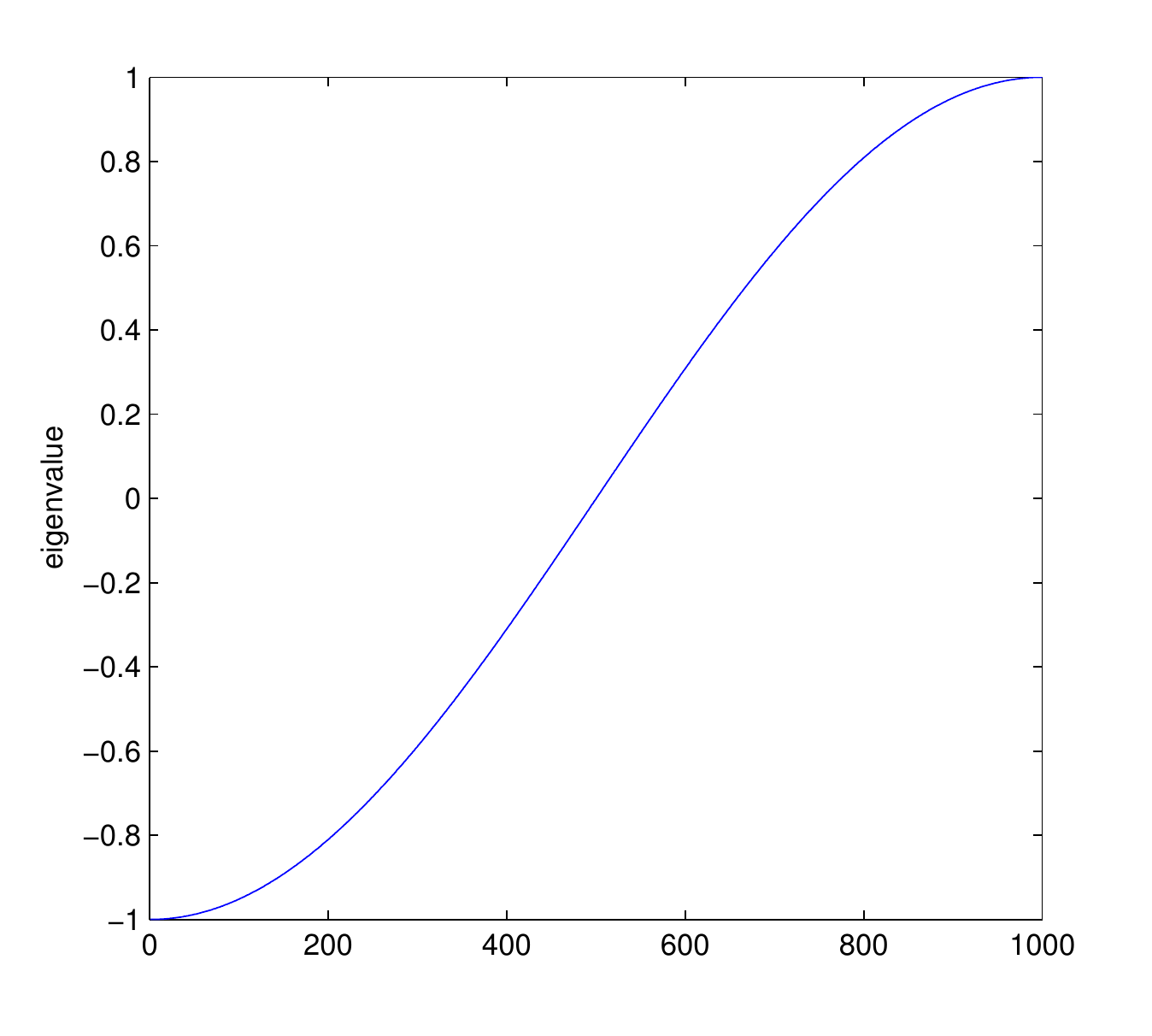}
\caption{Left: Graph representation of the uniform chain. Right: Distribution of the eigenvalues for $n=1000$.}
 \label{figUniformChain}
\end{figure}

\captionof{table}{Uniform chain, second eigenvector.} 
\resizebox{0.9\textwidth}{!}{
\begin{minipage}{\linewidth}
\begin{center} 

 \label{tableUniformChain2}
\small\begin{tabular}{|r | c c c c c c | c c c c|}
 \hline
  &  \multicolumn{6}{c|}{DS\&SM} & \multicolumn{4}{c|}{DAM} \\
 \hline
 n  & $\gamma_\text{eff}$ & $\gamma$ & it & $C_\text{op}$ & lev & $d$ & $\gamma$ & it & $C_\text{op}$ & lev \\ 
 \hline\hline
 1024 & 0.06 & 1.5e-3 & 4 & 1.99 & 7 & 0.5 & 0.96 & $>50$ & 1.99 & 7 \\
 \hline
 4096 & 0.02 & 4.3e-4 & 3 & 2.00 & 9 & 0.5 & 0.99 & $>50$ & 1.99 & 8 \\
 \hline
 16384 & 0.02 & 6.1e-3 & 3 & 2.00 & 11 & 0.5 & 0.99 & $>50$ & 2.00 & 10 \\
 \hline
 32768 & 0.10 & 0.09 & 5 & 2.00 & 11 & 0.5 & 0.99 & $>50$ & 2.00 & 11\\
 \hline
 65536 & 0.06 & 0.06 & 4 & 2.00 & 12 & 0.5 & 0.99 & $>50$ & 2.00 & 12\\
 \hline
 262144 & 0.06 & 0.03 & 4 & 2.00 & 14 & 0.5 & 0.99 & $>50$ & 2.00 & 14\\
 \hline
\end{tabular}
\end{center}
\end{minipage}}

\subsection{Uniform Chain with Weak Link}
This test problem is a 1D chain with one weak link in the middle of the chain with weight $\epsilon = 0.001$ and uniform weights $\omega = 1$ on the other edges. Figure \ref{figUniformChainWeak} shows a graph representation and the distribution of eigenvalues. We can use the same parameters as for the uniform chain.

As for the standard uniform chain without a weak link, DS\&SM performs well and has very good convergence properties. It by far outperforms DAM.

\begin{figure}[]
\scalebox{0.6}{
\begin{tikzpicture}[->,>=stealth',shorten >=1pt,auto,node distance=1.8cm,
                    thick,main node/.style={circle,draw,font=\sffamily\Large\bfseries}]
  \coordinate (cone) at (0,0);
  \node[main node, minimum size = 0cm, draw=none] (0) [] {}; 
  \node[main node, minimum size = 0.5cm] (1) at (0,5) {};
  \node[main node, minimum size = 0.5cm] (2) [right of=1] {};
  \node[main node, minimum size = 0.5cm] (3) [right of=2] {};
  \node[main node, minimum size = 0.5cm] (4) [right of=3] {};
  \node[main node, minimum size = 0.5cm] (5) [right of=4] {};
  \node[main node, minimum size = 0.5cm] (6) [right of=5] {};

  \path[every node/.style={font=\sffamily\small}]
    (1) edge [left] node[above] {} (2)
    (2) edge [left] node[above] {} (3)
        edge [left] node[below] {} (1)
    (3) edge [left] node[above] { \scalebox{1.43}{$\epsilon$} } (4)
        edge [left] node[below] {} (2)
    (4) edge [left] node[above] {} (5)
        edge [left] node[below] {} (3)
    (5) edge [left] node[above] {} (6)
        edge [left] node[below] {} (4)
   	(6) edge [left] node[below] {} (5);
\end{tikzpicture}}
\includegraphics[scale=0.4]{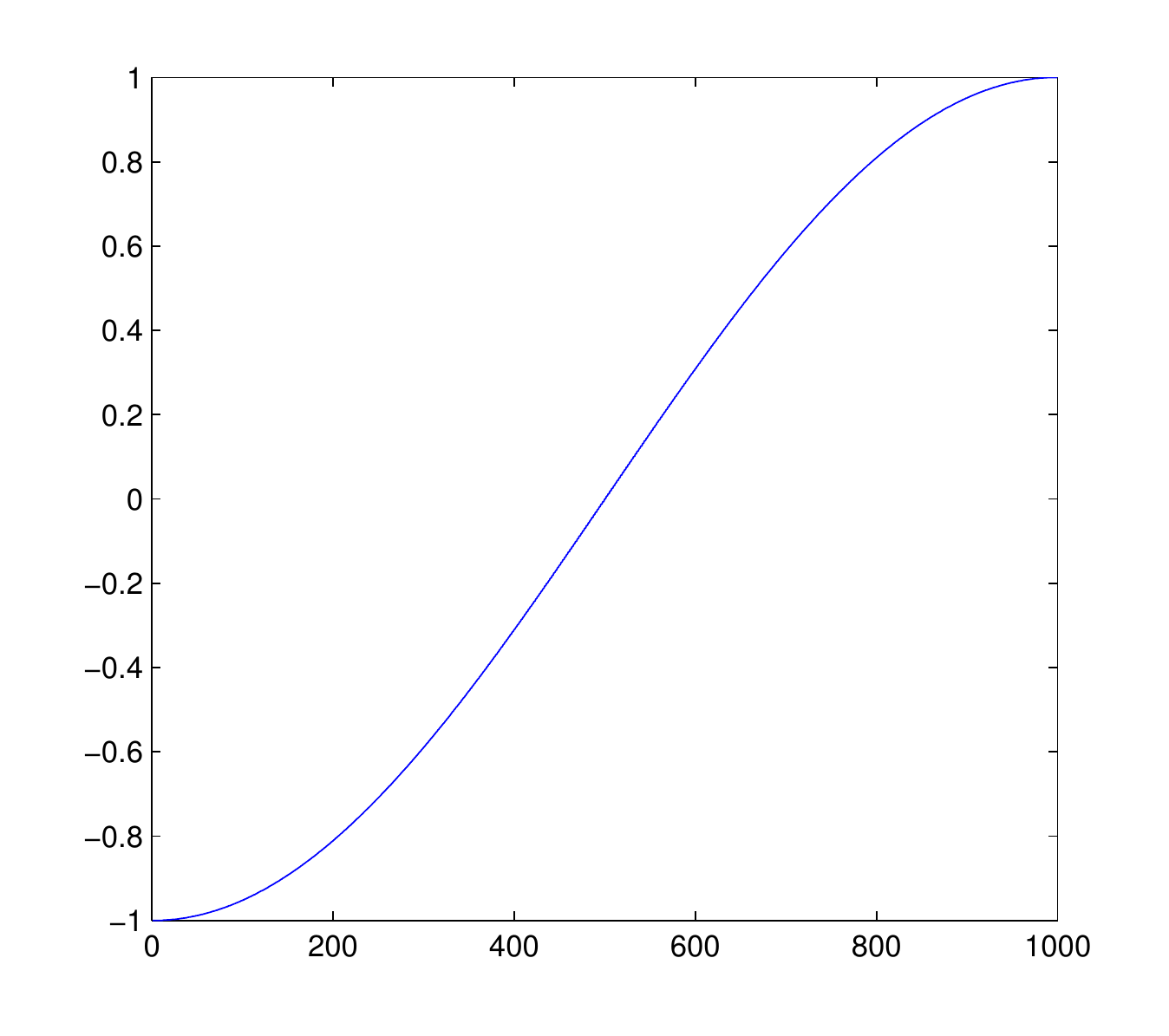}
\caption{Left: Graph representation of the uniform chain with one weak link. Right: Distribution of the eigenvalues for $n=1000$.}
 \label{figUniformChainWeak}
\end{figure}

\captionof{table}{Uniform chain with one weak link, second eigenvector.} 
\resizebox{0.9\textwidth}{!}{
\begin{minipage}{\linewidth}
\begin{center} 
 \label{tableUniformChainWeak2}
\small\begin{tabular}{|r | c c c c c c | c c c c|}
 \hline
  & \multicolumn{6}{c|}{DS\&SM} & \multicolumn{4}{c|}{DAM} \\
 \hline
 n & $\gamma_\text{eff}$ & $\gamma$ & it & $C_\text{op}$ & lev & $d$ & $\gamma$ & it & $C_\text{op}$ & lev \\ 
 \hline\hline
 1024 & 0.02 & 2.6e-3 & 3 & 1.98 & 6 & 0.5 & 0.93 & $>50$ & 1.98 & 6 \\
 \hline
 4096 & 0.02 & 0.01 & 3 & 1.99 & 8 & 0.5 & 0.99 & $>50$ & 1.99 & 8 \\
 \hline
 16384 & 0.02 & 0.02 & 3 & 2.00 & 10 & 0.5 & 0.99 & $>50$ & 1.99 & 10 \\
 \hline
 32768 & 0.02 & 0.06 & 3 & 2.00 & 11 & 0.5 & 0.99 & $>50$ & 1.99 & 11 \\
 \hline
 65536 & 0.10 & 3.8e-3 & 5 & 2.00 & 12 & 0.5 & 0.99 & $>50$ & 1.99 & 12 \\
 \hline
 262144 & 0.02 & 1.3e-3 & 3 & 2.00 & 14 & 0.5 & 0.99 & $>50$ & 1.99 & 14 \\
 \hline
\end{tabular}
\end{center}
\end{minipage}}

\subsection{2D Lattice}
The 2D-lattice is another simple structured problem. In figure \ref{fig2Dlattice} we see the graph representation and distribution of eigenvalues. For the numerical tests we consider quadratic lattices.
(Only for $n=32768$ we have a 128x256 lattice.)\\
We use aggregates of size $s=4$ and chose the other parameters as for the uniform chain. 
Regarding the second eigenvector we observe that the method DS\&SM has reasonable convergence rates that increase as $n$ grows. $C_\text{op}$ seems to be bounded. The square and stretch approach significantly accelerates the convergence compared to DAM.

\begin{figure}[]
\scalebox{0.6}{
\begin{tikzpicture}[->,>=stealth',shorten >=1pt,auto,node distance=1.8cm,
                    thick,main node/.style={circle,draw,font=\sffamily\Large\bfseries}]
  \coordinate (cone) at (0,0);
  \node[main node, minimum size = 0cm, draw=none] (0) [] {}; 
  \node[main node, minimum size = 0.5cm] (1) at (1,5.5) {};
  \node[main node, minimum size = 0.5cm] (2) [right of=1] {};
  \node[main node, minimum size = 0.5cm] (3) [right of=2] {};
  \node[main node, minimum size = 0.5cm] (4) [right of=3] {};
  \node[main node, minimum size = 0.5cm] (5) [right of=4] {};
  \node[main node, minimum size = 0.5cm] (6) [below of=1] {};
  \node[main node, minimum size = 0.5cm] (7) [below of=2] {};
  \node[main node, minimum size = 0.5cm] (8) [below of=3] {};
  \node[main node, minimum size = 0.5cm] (9) [below of=4] {};
  \node[main node, minimum size = 0.5cm] (10) [below of=5] {};
  \node[main node, minimum size = 0.5cm] (11) [below of=6] {};
  \node[main node, minimum size = 0.5cm] (12) [below of=7] {};
  \node[main node, minimum size = 0.5cm] (13) [below of=8] {};
  \node[main node, minimum size = 0.5cm] (14) [below of=9] {};
  \node[main node, minimum size = 0.5cm] (15) [below of=10] {};

  \path[every node/.style={font=\sffamily\small}]
    (1) edge [left] node[above] {} (2)
        edge [left] node[right] {} (6)
    (2) edge [left] node[above] {} (3)
        edge [left] node[below] {} (1)
        edge [left] node[right] {} (7)
    (3) edge [left] node[above] {} (4)
        edge [left] node[below] {} (2)
        edge [left] node[right] {} (8)
    (4) edge [left] node[above] {} (5)
        edge [left] node[below] {} (3)
        edge [left] node[right] {} (9)
   	(5) edge [left] node[below] {} (4)
        edge [left] node[right] {} (10)
    (6) edge [left] node[above] {} (7)
    	edge [left] node[right] {} (11)
    	edge [left] node[above] {} (1)
    (7)	edge [left] node[above] {} (8)
    	edge [left] node[above] {} (12)
    	edge [left] node[above] {} (6)
    	edge [left] node[above] {} (2)
    (8)	edge [left] node[above] {} (9)
    	edge [left] node[above] {} (13)
    	edge [left] node[above] {} (7)
    	edge [left] node[above] {} (3)
    (9)	edge [left] node[above] {} (10)
    	edge [left] node[above] {} (14)
    	edge [left] node[above] {} (8)
    	edge [left] node[above] {} (4)
    (10)edge [left] node[above] {} (15)
    	edge [left] node[above] {} (9)
    	edge [left] node[above] {} (5)
    (11)edge [left] node[above] {} (12)
    	edge [left] node[right] {} (6)
    (12)edge [left] node[above] {} (13)
		edge [left] node[above] {} (11)
		edge [left] node[right] {} (7)
    (13)edge [left] node[above] {} (14)
		edge [left] node[above] {} (12)
		edge [left] node[right] {} (8)
    (14)edge [left] node[above] {} (15)
    	edge [left] node[above] {} (13)
    	edge [left] node[right] {} (9)
    (15)edge [left] node[above] {} (14)
    	edge [left] node[right] {} (10);
\end{tikzpicture}}
\includegraphics[scale=0.4, trim = -1cm +1cm 0 0]{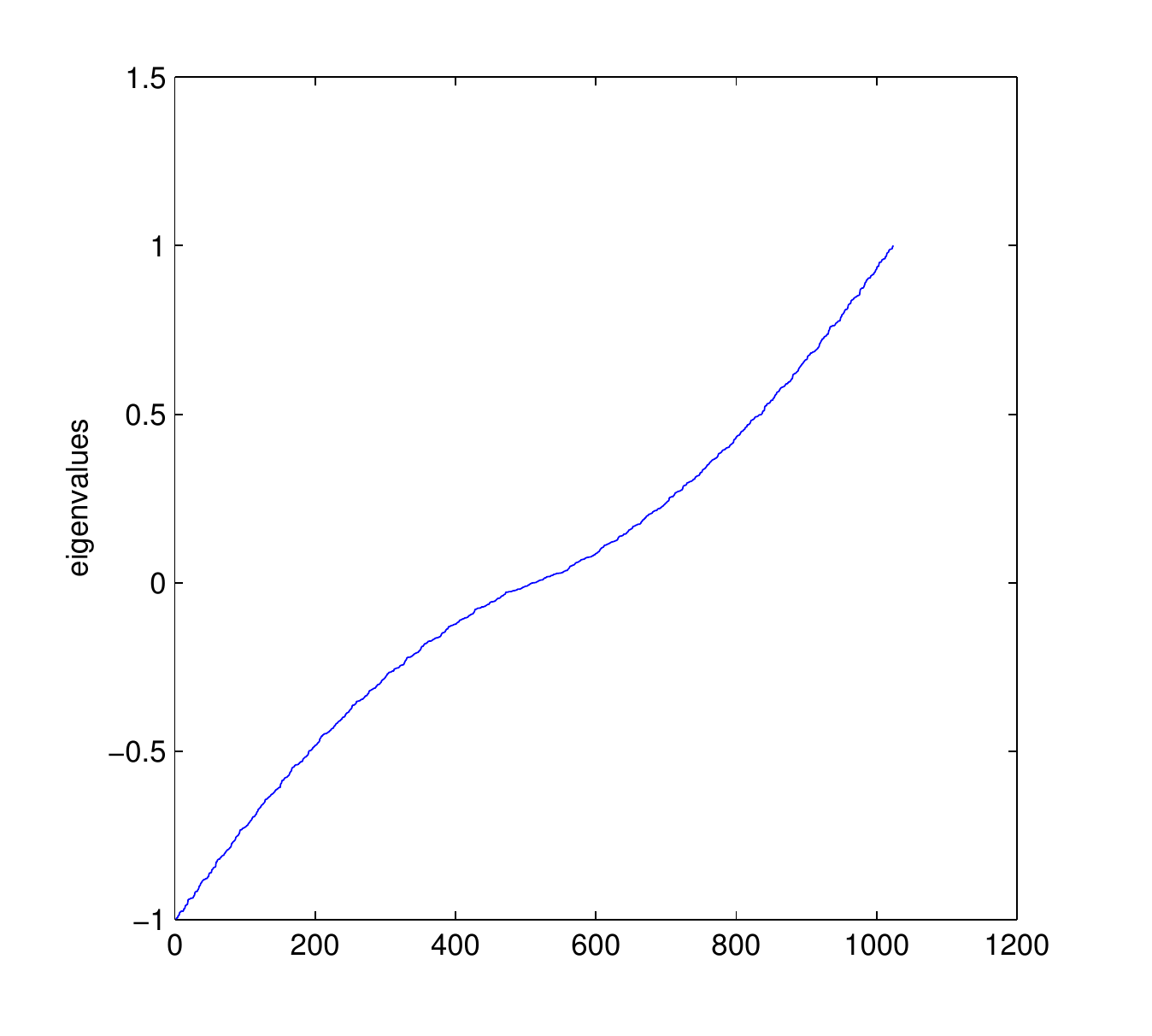}
\caption{Left: Graph of uniform 3x5 lattice. Right: Distribution of the eigenvalues for a 32x32 lattice.}
\label{fig2Dlattice}
\end{figure}

\captionof{table}{2D-lattice, second eigenvector.} 
\resizebox{0.9\textwidth}{!}{
\begin{minipage}{\linewidth}
\begin{center} 
 \label{table2D-lattice2}
\small\begin{tabular}{|r | c c c c c c | c c c c|}
 \hline
  & \multicolumn{6}{c|}{DS\&SM} & \multicolumn{4}{c|}{DAM} \\
 \hline
 n & $\gamma_\text{eff}$ & $\gamma$ & it & $C_\text{op}$ & lev & $d$ & $\gamma$ & it & $C_\text{op}$ & lev \\ 
 \hline\hline
 1024 & 0.14 & 0.09 & 7 & 1.67 & 4 & 0.5 & 0.15 & 8 & 1.39 & 4 \\
 \hline
 4096 & 0.37 & 0.33 & 13 & 1.76 & 5 & 0.5 & 0.48 & 15 & 1.37 & 5 \\
 \hline
 16384 & 0.36 & 0.40 & 14 & 1.63 & 6 & 0.5 & 0.77 & 45 & 1.37 & 6 \\
 \hline
 32768 & 0.45 & 0.56 & 18 & 1.59 & 6 & 0.5 & 0.83 & $>50$ & 1.33 & 6 \\
 \hline
 65536 & 0.50 & 0.55 & 20 & 1.64 & 7 & 0.5 & 0.90 & $>50$ & 1.36 & 7 \\
 \hline
\end{tabular}
\end{center}
\end{minipage}}

\subsection{Random Delaunay Triangulation}
The problems we considered so far had a simple geometric structure which implicitly prescribed the algebraically generated aggregates. The (undirected) graph for this example problem is a Delaunay triangulation for $n$ randomly chosen points in the $[0,1]^2$ unit square with uniform weights. Figure \ref{figRandomWalk} shows an example of such a graph and the spectrum of the corresponding Markov chain.\\

For $n=262144$ we used the aggregation parameter $\theta = 0.1$ instead of $0.25$. We use aggregates of size $4$ and besides that the standard parameters. For D\&SM we use a higher number of 300 relaxation steps.\\

Regarding the second eigenvector we observe that the convergence factors approach $1$ and the method becomes less effective for large $n$. Moreover DS\&SM does not significantly outperform standard DAM. But still the method is superior to plain relaxation.
The inferior results can be explained by the observation that for this problem the coarse level correction introduces new errors which need to be reduced by additional relaxation steps.
The random grid makes the choice of good aggregates more difficult.
If the aggregation algorithm is not optimal, this leads to slower convergence.
Note here, that the convergence is not caused by the large number of relaxation steps, but by relaxation combined with coarse level correction. Plain relaxation alone does not significantly change the residual at all for large matrices.

\begin{figure}[H]
\includegraphics[scale=0.4, trim = +2cm 0 0 0]{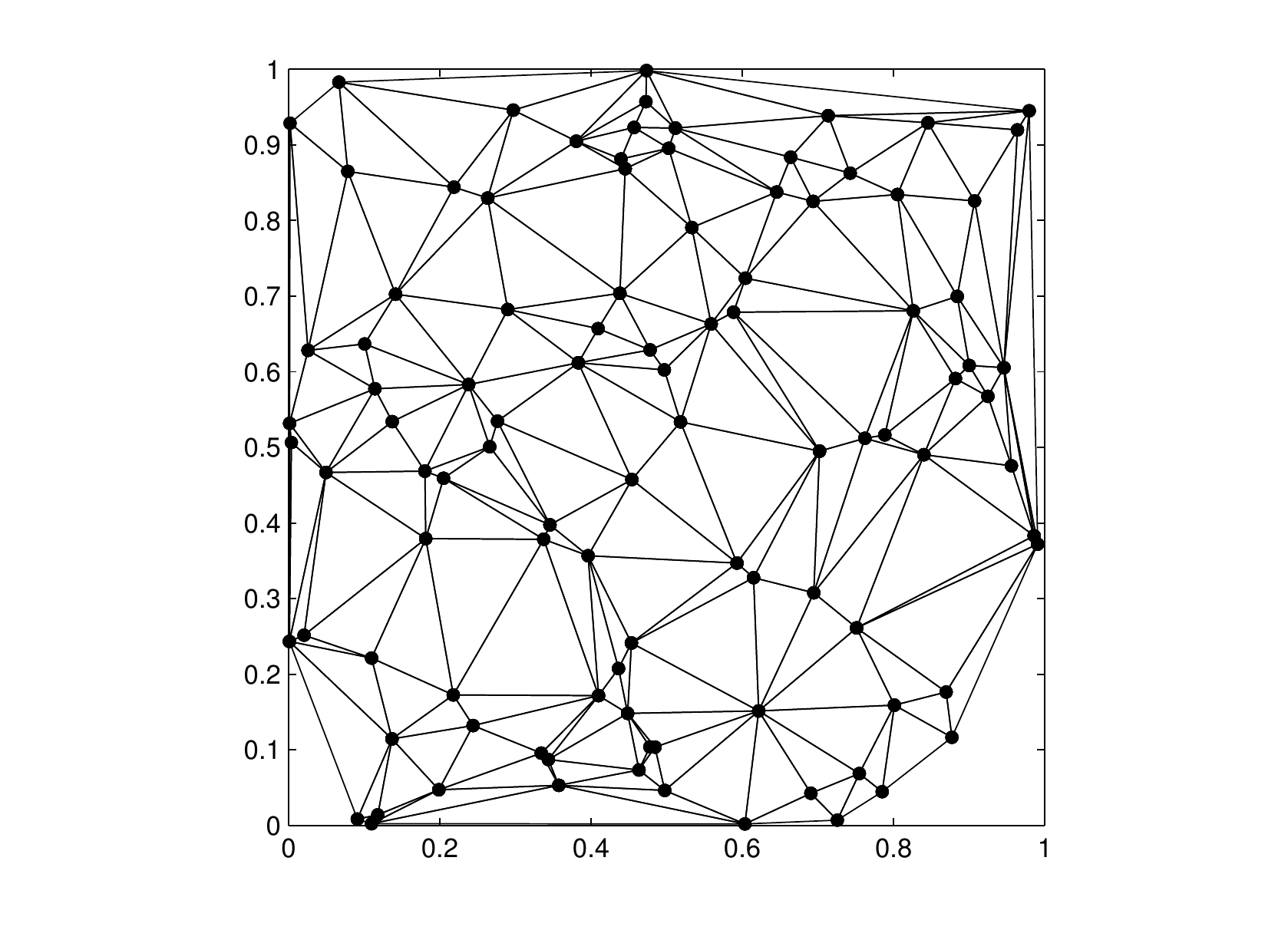}
\includegraphics[scale=0.4, trim = +3cm 0 0 0]{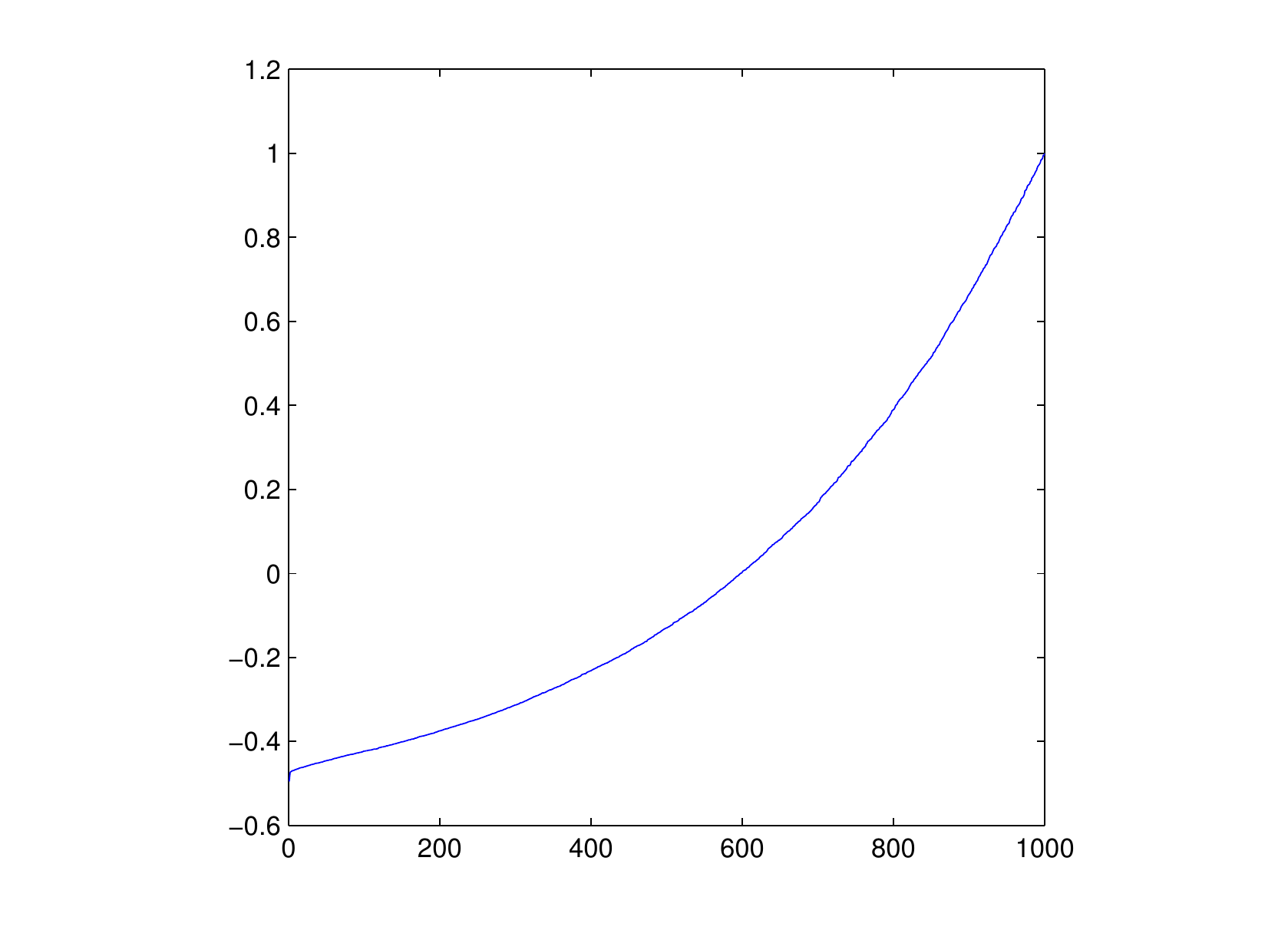}
\caption{Left: Example of a Delaunay triangulation for $n=100$. Right: Distribution of the eigenvalues for $n=1000$.}
\label{figRandomWalk}
\end{figure}

\captionof{table}{Random Walk, second eigenvector} 
\resizebox{0.9\textwidth}{!}{
\begin{minipage}{\linewidth}
\begin{center} 
 \label{tableRandomWalk2}
\small\begin{tabular}{|r | c c c c c c | c c c c|}
 \hline
  & \multicolumn{6}{c|}{DS\&SM} & \multicolumn{4}{c|}{DAM} \\
 \hline
 n & $\gamma_\text{eff}$ & $\gamma$ & it & $C_\text{op}$ & lev & $d$ & $\gamma$ & it & $C_\text{op}$ & lev \\ 
 \hline\hline
 1024 & 0.48 & 0.47 & 18 & 1.76 & 4 & 0.5 & 0.56 & 28 & 1.38 & 4 \\
 \hline
 4096 & 0.67 & 0.71 & 31 & 1.84 & 5 & 0.5 & 0.92 & $>50$ & 1.38 & 5 \\
 \hline
 16384 & 0.66 & 0.75 & 30 & 1.87 & 6 & 0.5 & 0.98 & $>50$ & 1.38 & 6 \\
 \hline
 32768 & - 	& 0.91 & $>50$ & 1.89 & 6 & 0.5 & 0.98 & $>50$ & 1.38 & 7 \\
 \hline
\end{tabular}
\end{center}
\end{minipage}}

\subsection{1D Multi-Well Potential}
The double-well potential is a simplified academic example problem. For instance, it is used to illustrate the reaction pathway of a protein folding process.
It is based on an energy potential with two wells (figure \ref{figureEnergyPotential}) on which a particle is moving around driven by both diffusion and drift induced by the energy potential. We discretize by dividing the domain $[0,1]$ into equidistant intervals and use a Monte Carlo method to estimate the transition probabilities between the boxes.

Note that for this test problem the number of non-zero entries in the transition matrix grows quadratically with $n$, whereas for all other test problems the number of non-zero entries is proportional to the matrix size. for $n=4096$ the matrix has about 2.3e+6 non-zero entries and for $n=16384$ the matrix has more than 3.7e+7 non-zero entries.\\

We use aggregates of size $2$ and take the average of the diagonal elements as the stretching parameter.\\
Note that there is a spectral gap after the second eigenvalue. Hence the second eigenvector is easier to compute. We use only 3x pre- and post-relaxation steps to emphasize the convergence effect of the coarse level improvement. Also square and stretch does not significantly improve the convergence.\\

The method can also be successfully applied to a four-wells potential constructed similarly as the double-well potential. The four-well potential however has more than one eigenvalue close to $1$, hence the computation of the second eigenvalue is harder. For the multilevel V-cycle we use aggregates of size $3$ and $9$ pre- and post-relaxation steps. In table \ref{fourWellPotentialTable} we can observe that the convergence factor is independent of $n$ and $C_{op}$ grows slightly with $n$.

\begin{figure}[]
\includegraphics[scale=0.40, trim = 0 0 0 0]{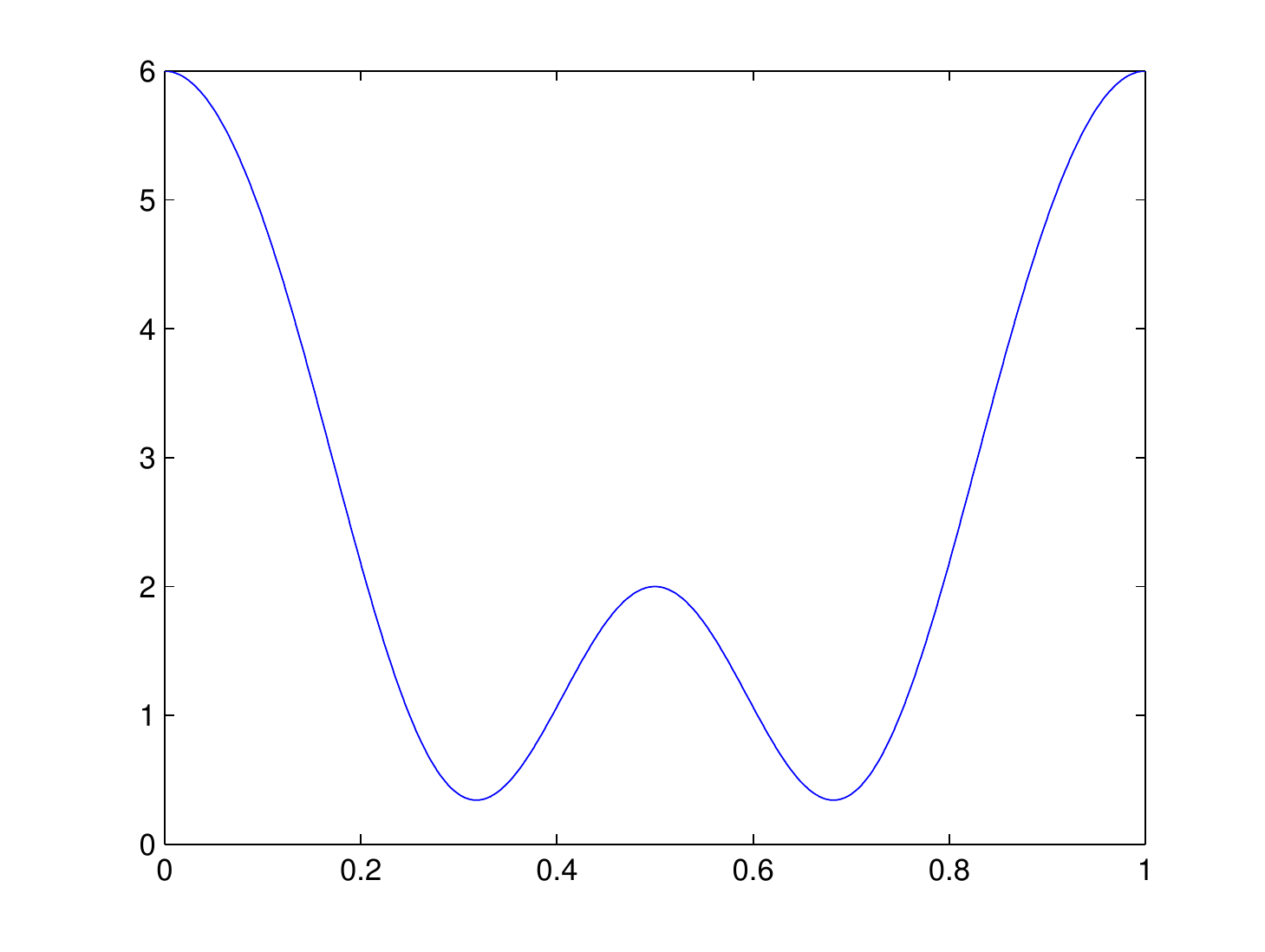}
\includegraphics[scale=0.40, trim = 0 0 0 0]{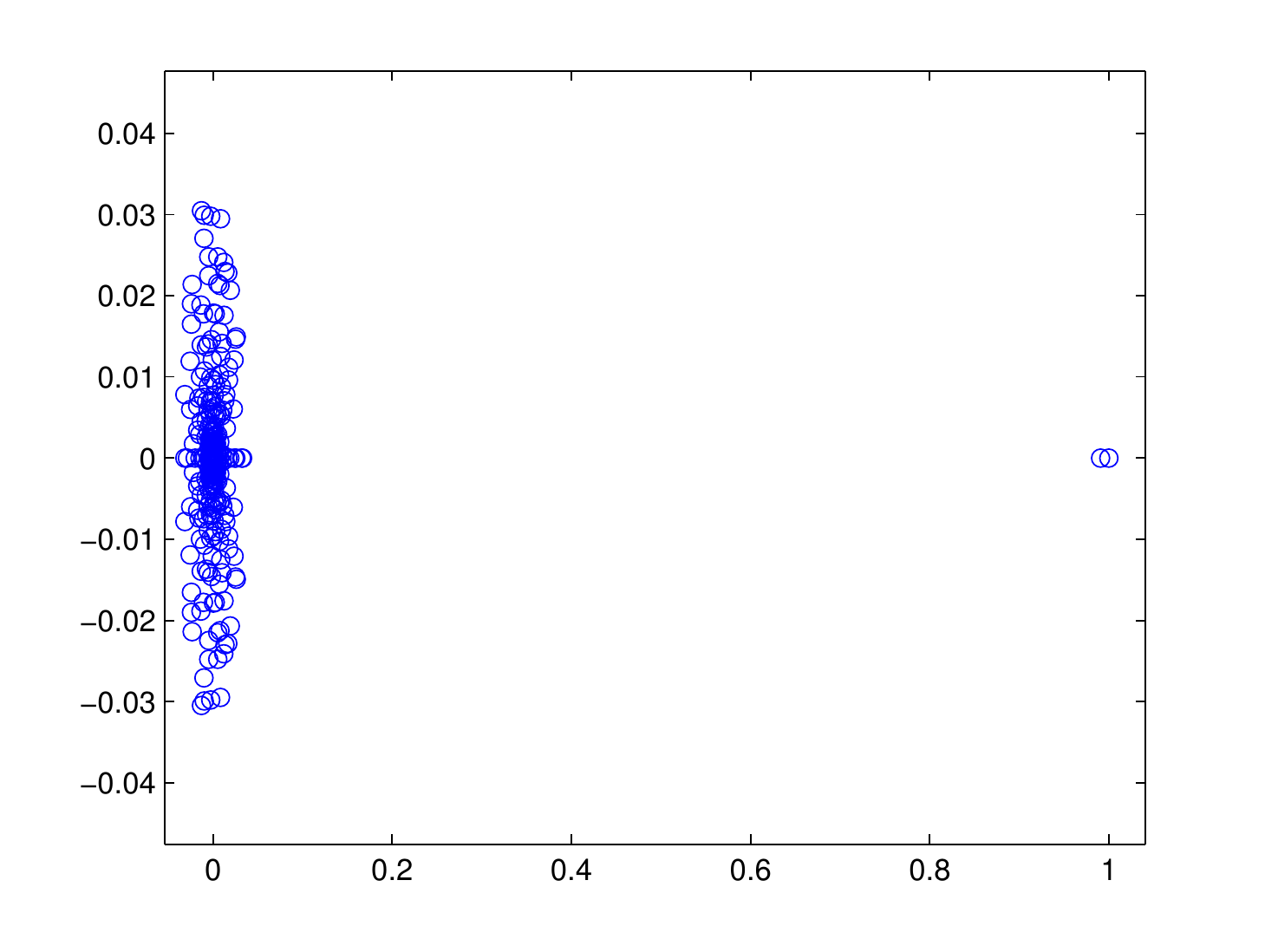}
\caption{Left: Double-well potential. Right: Eigenspectrum of the discretized transition matrix. There is a spectral gap after the second eigenvector. This is due to the existence of the two metastable sets induced by the wells.}
\end{figure}

\begin{figure}[]
\includegraphics[scale=0.40, trim = 0 +1cm 0 0]{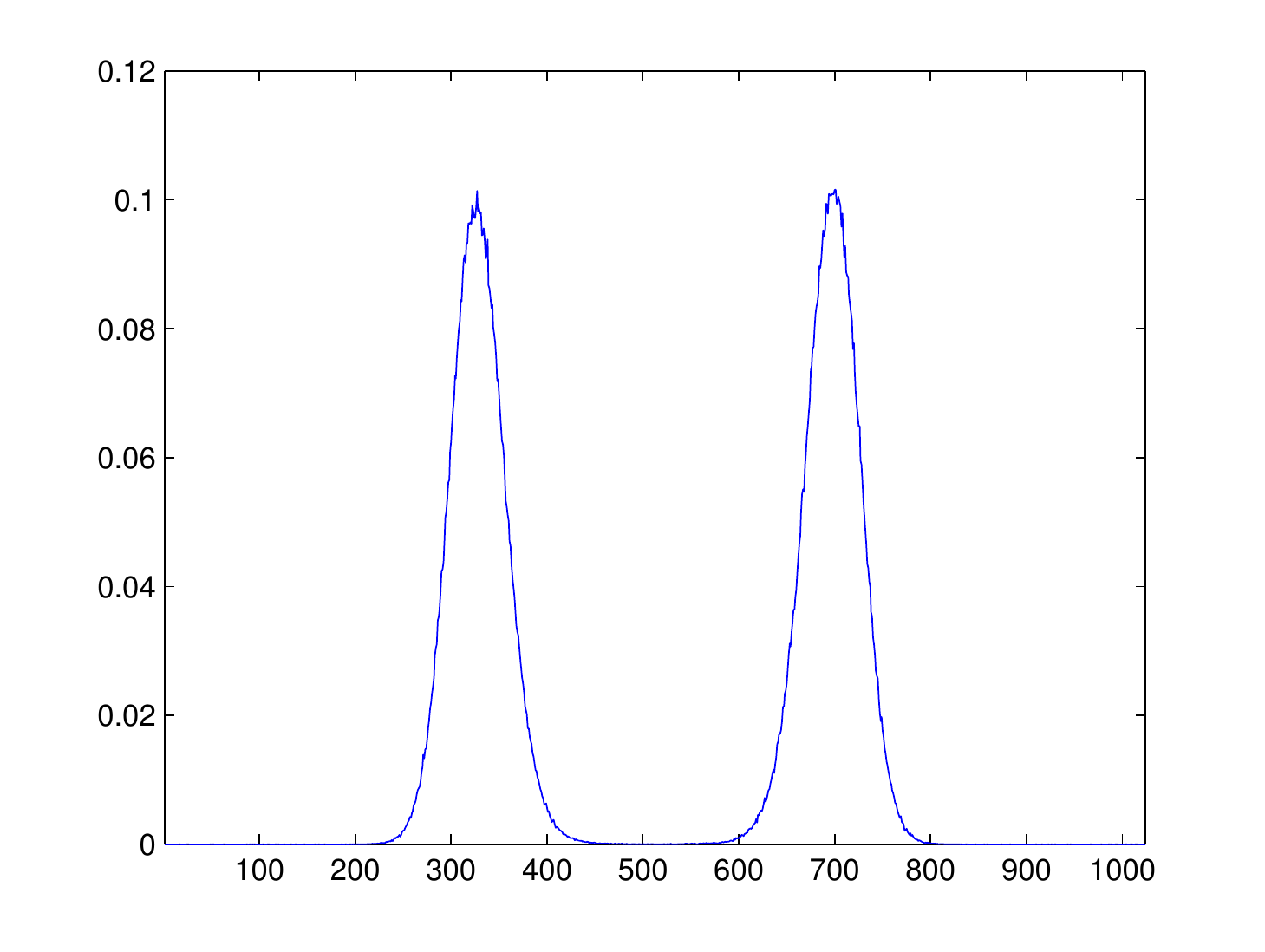}
\includegraphics[scale=0.40, trim = 0 +1cm 0 0]{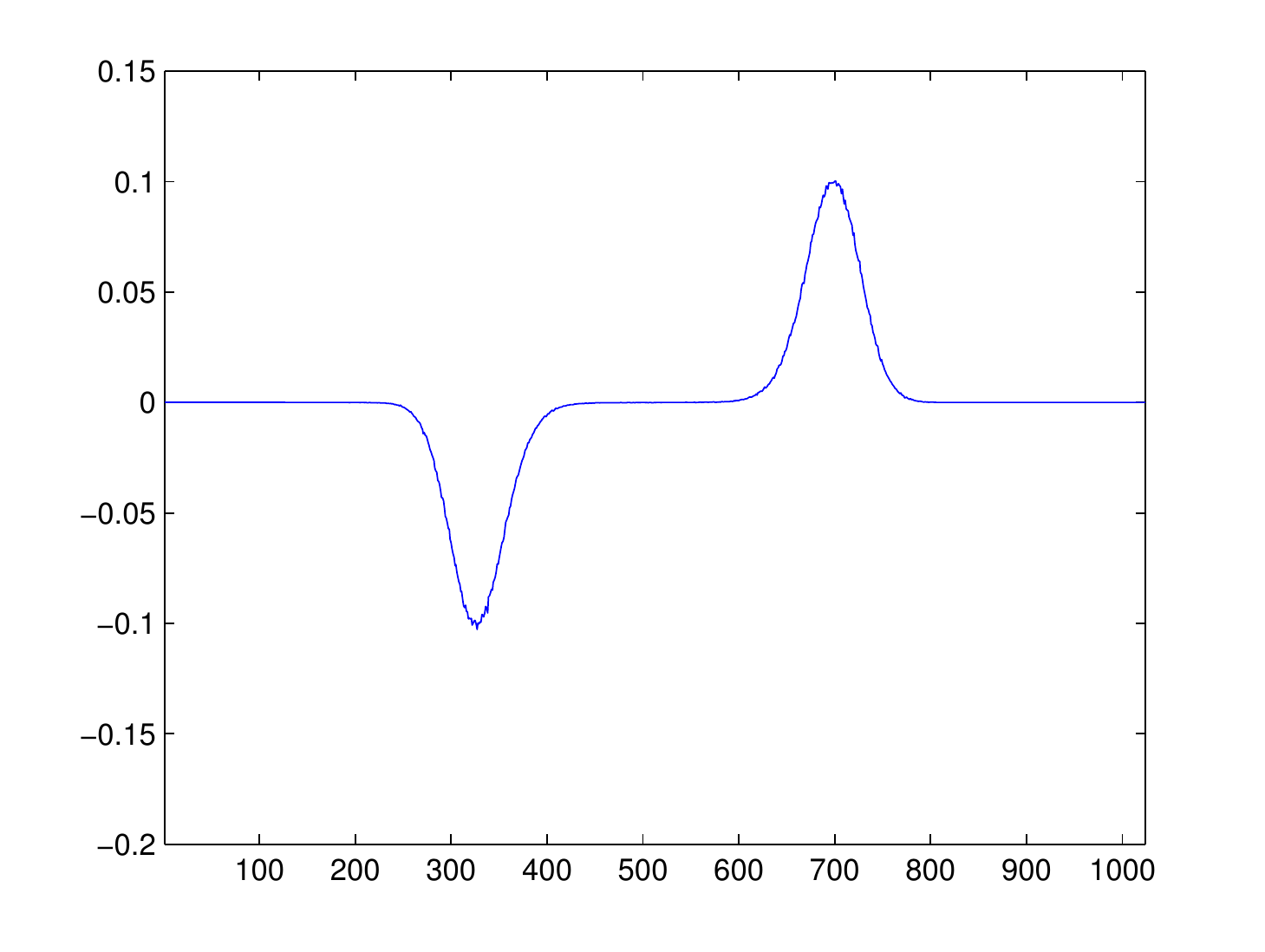}
\caption{Left: Stationary distribution of the double-well potential. Right: Second eigenvector corresponding to the characteristic slowest dynamic of the process.}
\label{figureEnergyPotential}
\end{figure}

\captionof{table}{Double-well potential, second eigenvector.}
\resizebox{0.9\textwidth}{!}{
\begin{minipage}{\linewidth}
\begin{center} 
 \label{tableTwoWell2}
\small\begin{tabular}{|r | c c c c c c | c c c c|}
 \hline
  &  \multicolumn{6}{c|}{DS\&SM} & \multicolumn{4}{c|}{DAM} \\
 \hline
 n & $\gamma_\text{eff}$ & $\gamma$ & it & $C_\text{op}$ & lev & $d$ & $\gamma$ & it & $C_\text{op}$ & lev \\ 
 \hline\hline
 512 & 0.10 & 0.03 & 7 & 1.45 & 4 & 0.45 & 0.04 & 6 & 1.23 & 4 \\
 \hline
 1024 & 0.15 & 0.10 & 8 & 1.54 & 4 & 0.45 & 0.11 & 7 & 1.29 & 5 \\
 \hline
 2048 & 0.17 & 0.10 & 8 & 1.60 & 5 & 0.44 & 0.12 & 8 & 1.34 & 6 \\
 \hline
 4096 & 0.10 & 0.08 & 6 & 1.66 & 6 & 0.43 & 0.10 & 7 & 1.34 & 6 \\
 \hline
\end{tabular}
\end{center}
\end{minipage}}

\begin{figure}[H]
\includegraphics[scale=0.40, trim = 0 0 0 0]{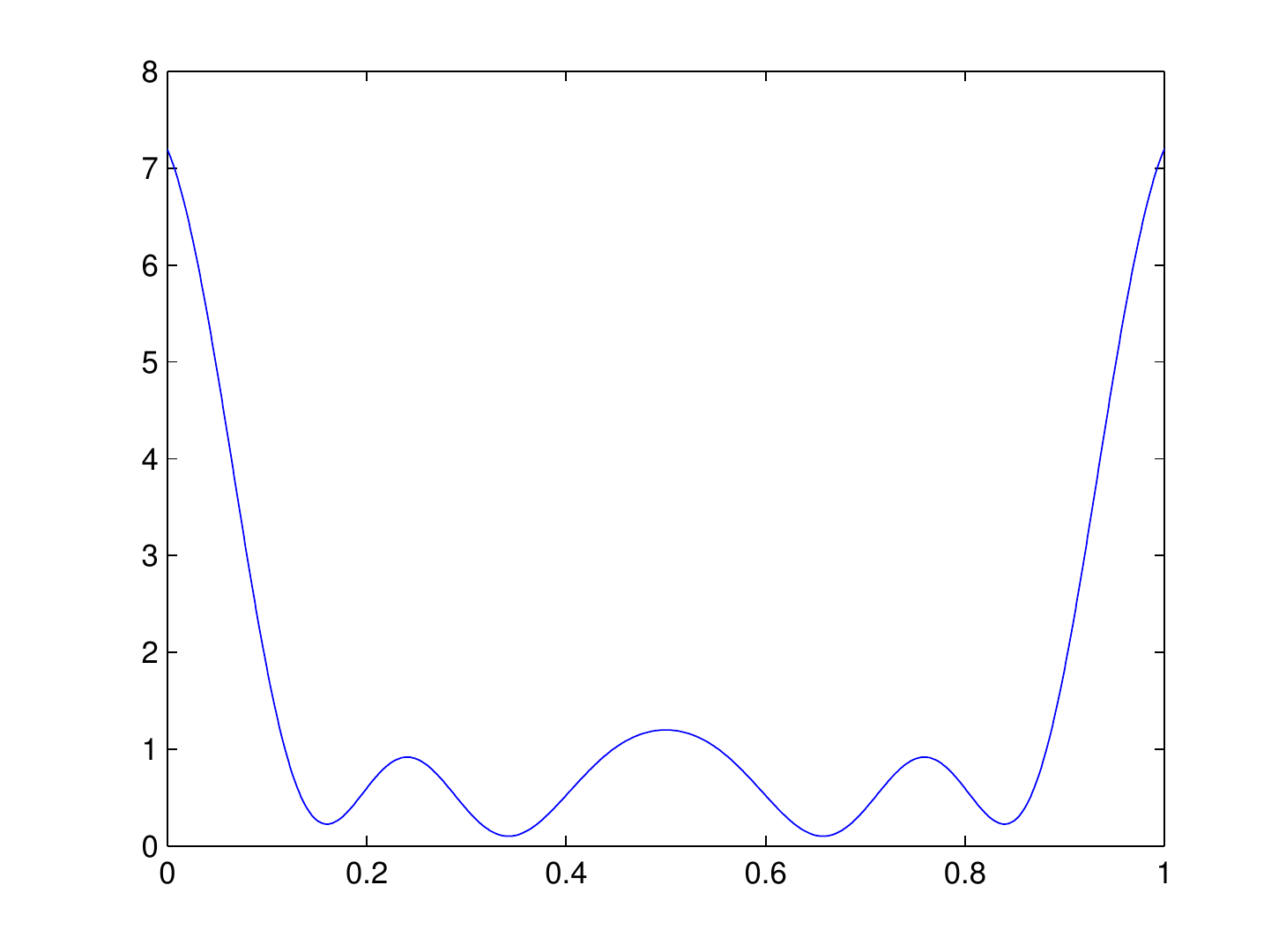}
\includegraphics[scale=0.40, trim = 0 0 0 0]{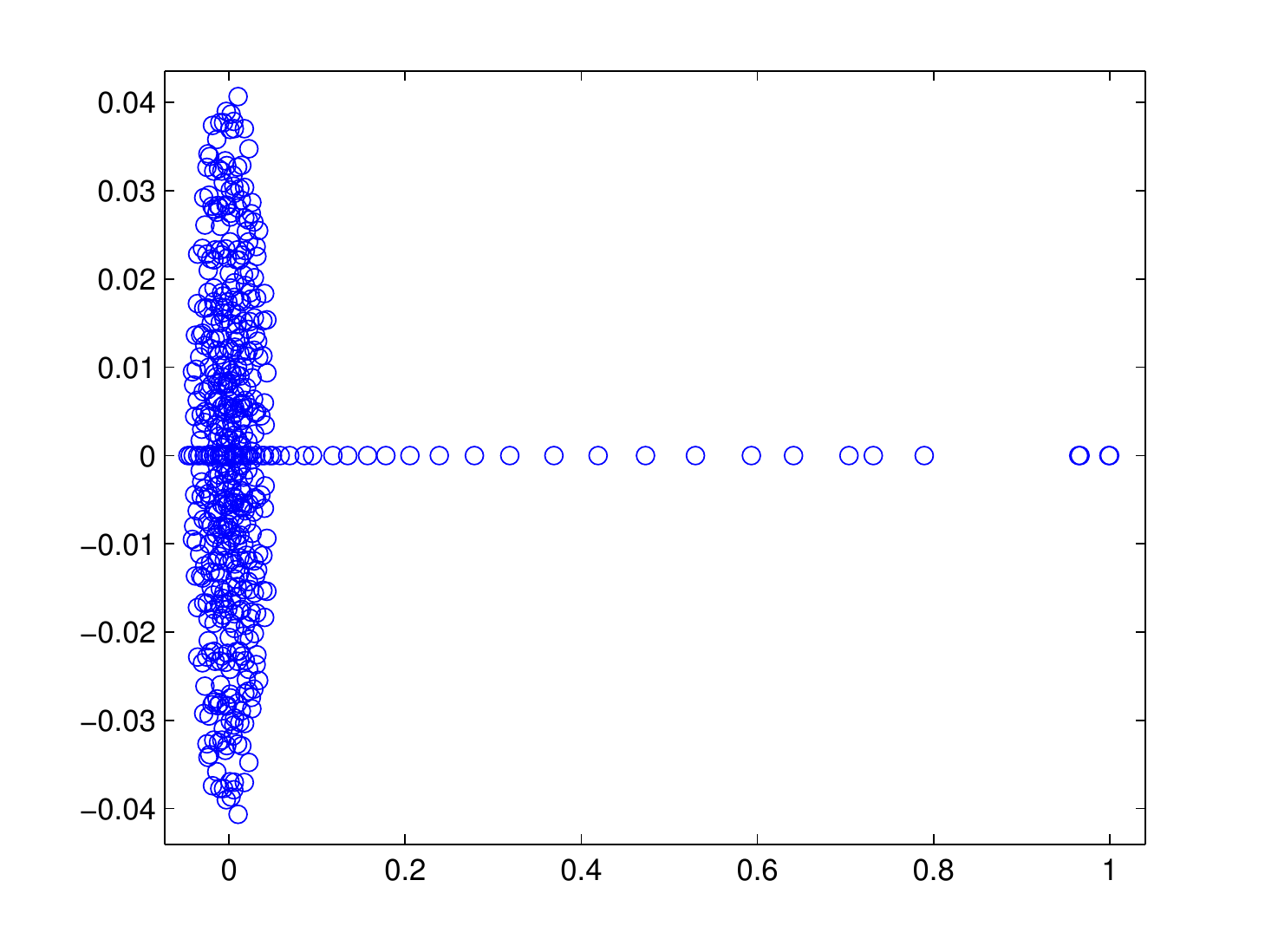}
\caption{Left: Four-well potential. Right: Eigenspectrum of the discretized transition matrix. The two dots close to 1 are in fact four (overlapping) dots.}
\end{figure}

\captionof{table}{Four-well potential, second eigenvector.} 
\label{fourWellPotentialTable}
\resizebox{0.9\textwidth}{!}{
\begin{minipage}{\linewidth}
\begin{center} 
\small\begin{tabular}{|r | c c c c c c | c c c c|}
 \hline
  &  \multicolumn{6}{c|}{DS\&SM} & \multicolumn{4}{c|}{DAM} \\
 \hline
 n & $\gamma_\text{eff}$ & $\gamma$ & it & $C_\text{op}$ & lev & $d$ & $\gamma$ & it & $C_\text{op}$ & lev \\ 
 \hline\hline
 512 & 0.17 & 0.23 & 9 & 1.46 & 4 & 0.45 & 0.36 & 15 & 1.24 & 4 \\
 \hline
 1024 & 0.11 & 0.11 & 7 & 1.49 & 4 & 0.45 & 0.27 & 13 & 1.26 & 4 \\
 \hline
 2048 & 0.15 & 0.17 & 8 & 1.52 & 5 & 0.44 & 0.31 & 14 & 1.27 & 5 \\
 \hline
 4096 & 0.19 & 0.19 & 9 & 1.55 & 6 & 0.43 & 0.32 & 15 & 1.27 & 6 \\
 \hline
\end{tabular}
\end{center}
\end{minipage}}

\subsection{Complex Uniform Chain}

\label{artificialChain}

This example is a modified uniform chain with additional transitions along the chain as depicted in figure \ref{figArtificial}.
The non-symmetry of the sparsity pattern of the transition matrix causes complex eigenvalues. We see that the eigenvalues with large imaginary part are contained in a ball of radius $\approx 0.5$ around zero. The eigenvalues close to $1$ remain real-valued.

We choose aggregates of size $3$. The stretching parameter $d$ has to be smaller than $0.5$ due to the imaginary spectrum.
Regarding the second eigenvector we observe that DS\&SM outperforms DAM and has a bounded operator complexity. However the convergence factor increases as $n$ grows.

\begin{figure}[H]
\centering
\scalebox{0.7}{
\begin{tikzpicture}
	[->,>=stealth',shorten >=1pt,auto,node distance=1.8cm,
                    thick,main node/.style={circle,draw,font=\sffamily\Large\bfseries}]

  \coordinate (cone) at (0,0);
  \node[main node, minimum size = 0cm, draw=none] (0) [] {}; 
  \node[main node, minimum size = 0.5cm] (1) at (0,3) {};
  \node[main node, minimum size = 0.5cm] (2) [right of=1] {};
  \node[main node, minimum size = 0.5cm] (3) [right of=2] {};
  \node[main node, minimum size = 0.5cm] (4) [right of=3] {};
  \node[main node, minimum size = 0.5cm] (5) [right of=4] {};
  \node[main node, minimum size = 0.5cm] (6) [right of=5] {};

  \path[every node/.style={font=\sffamily\small}]
    (1) edge [left] node[above] {} (2)
    	edge [bend left] node[] {} (3)
    (2) edge [left] node[above] {} (3)
        edge [left] node[below] {} (1)
        edge [bend left] node[] {} (4)
    (3) edge [left] node[above] {} (4)
        edge [left] node[below] {} (2)
        edge [bend left] node[] {} (5)
    (4) edge [left] node[above] {} (5)
        edge [left] node[below] {} (3)
        edge [bend left] node[] {} (6)
   	(5) edge [left] node[below] {} (4)
   		edge [left] node[below] {} (6)
   		edge [bend left] node[] {} (1)
   	(6) edge [left] node[below] {} (5)
   		edge [bend left] node[] {} (2);

\end{tikzpicture}}
\includegraphics[scale=0.41, trim = 0 +1cm 0 0]{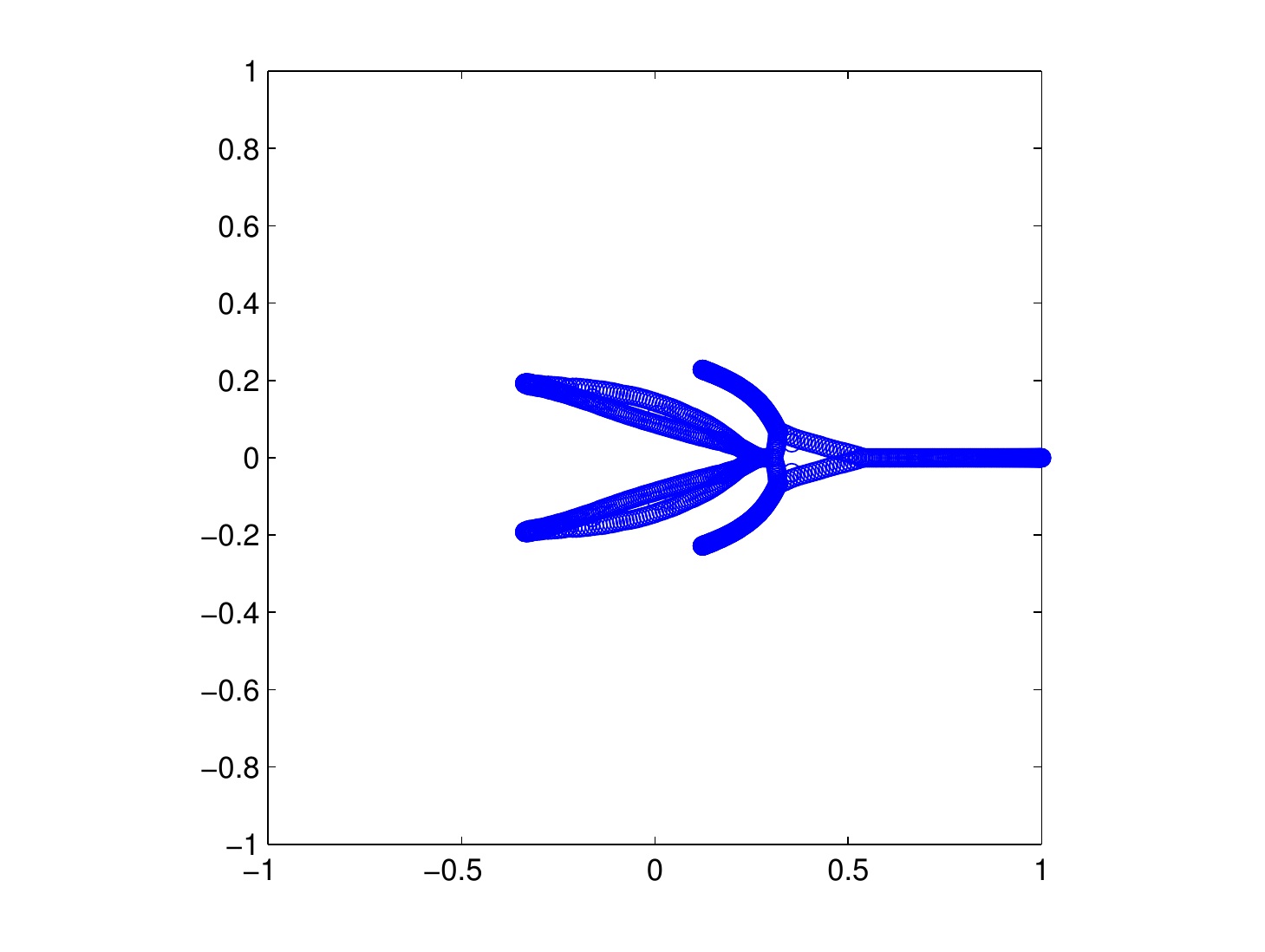}
\caption{Left: Directed graph of complex uniform chain. Right: Complex spectrum for $n=1000$.}
\label{figArtificial}
\end{figure}

\vspace{1em}
\captionof{table}{Complex uniform chain, second eigenvector.} \label{tableArtificialChain2}
\resizebox{0.9\textwidth}{!}{
\begin{minipage}{\linewidth}
\begin{center} 
\small\begin{tabular}{|r | c c c c c c | c c c c|}
 \hline
  &  \multicolumn{6}{c|}{DS\&SM} & \multicolumn{4}{c|}{DAM} \\
 \hline
 n & $\gamma_\text{eff}$ & $\gamma$ & it & $C_\text{op}$ & lev & $d$ & $\gamma$ & it & $C_\text{op}$ & lev \\ 
 \hline\hline
 1024 & 0.23 & 0.41 & 12 & 1.32 & 5 & 0.46 & 0.86 & $>50$ & 1.18 & 5 \\
 \hline
 4096 & 0.44 & 0.69 & 21 & 1.32 & 7 & 0.46 & 0.98 & $>50$ & 1.19 & 7 \\
 \hline
 16384 & 0.38 & 0.76 & 18 & 1.32 & 9 & 0.46 & 0.99 & $>50$ & 1.19 & 9 \\
 \hline
 32768 & 0.55 & 0.89 & 29 & 1.32 & 10 & 0.45 & 0.99 & $>50$ & 1.19 & 10 \\
 \hline
\end{tabular}
\end{center}
\end{minipage}
} 

\section{Conclusions}
The new method DS\&SM to compute the second eigenvector has superior convergence properties for matrices with few connections like the uniform chain, the uniform chain with a weak link and the complex uniform chain. Also for structured problems like the 2D-lattice the methods is applicable. For the multi-well potential it appears that the method is very scalable and has good convergence results. However the method is limited by its relatively poor convergence for the unstructured Delaunay triangulation with large $n$.
A reason for this behavior might be that the aggregation method is not optimal for this type of grid. But the method is still far more effective than ordinary relaxation and improves the convergence. Also it is possible that a different aggregation method could significantly improve the convergence.\\

Regarding the analysis, it would be interesting if the assumption on slow processes can be proven for certain classes of stochastic matrices and aggregation methods.\\
So far the theory for smooth errors has been developed largely for symmetric positive-definite matrices. I think it would be good if one could extend the theory to arbitrary relaxation methods with a thorough framework. 

Finally one could extend the method to compute the first $k$ dominant eigenvectors with a multilevel approach. At least for problems with few connections like the uniform chain, this approach appears to be very promising.

\bibliography{literatur}

\end{document}